\newtheorem{theorem}{Theorem}[section]
\newtheorem{lemma}[theorem]{Lemma}
\newtheorem{corollary}[theorem]{Corollary}
\newtheorem{proposition}[theorem]{Proposition}
\newtheorem{conjecture}[theorem]{Conjecture}
\newtheorem{question}[theorem]{Question}
\numberwithin{equation}{section}
\title{Odd edge-colorings of subdivisions of odd graphs}
\author
{
	Mirko Petru\v{s}evski \thanks{Department of Mathematics and Informatics, Faculty of Mechanical Engineering - Skopje, Republic of Macedonia. E-Mail: \texttt{mirko.petrushevski@gmail.com}},
	\quad
	Riste \v{S}krekovski\thanks{FMF, University of Ljubljana \& Faculty of Information Studies, Novo mesto, Slovenia. E-Mail: \texttt{skrekovski@gmail.com}}
}
\begin{document}
\maketitle

\begin{abstract}
An odd graph is a finite graph all of whose vertices have odd degrees. A graph $G$ is decomposable into $k$ odd subgraphs if its edge set can be partitioned into $k$ subsets each of which induces an odd subgraph of $G$. The minimum value of $k$ for which such a decomposition of $G$ exists is the odd chromatic index, $\chi_{o}'(G)$, introduced by Pyber (1991). For every $k\geq\chi_{o}'(G)$, the graph $G$ is said to be odd $k$-edge-colorable. Apart from two particular exceptions, which are respectively odd $5$- and odd $6$-edge-colorable, the rest of connected loopless graphs are odd $4$-edge-colorable, and moreover one of the color classes can be reduced to size $\leq2$. In addition, it has been conjectured that an odd $4$-edge-coloring with a color class of size at most $1$ is always achievable. Atanasov et al. (2016) characterized the class of loopless subcubic graphs in terms of the value $\chi_{o}'(G)\leq4$. In this paper, we extend their result to a characterization of all loopless subdivisions of odd graphs in terms of the value of the odd chromatic index. This larger class $\mathcal{S}$ is of a particular interest as it collects all `least instances' of non-odd graphs.  As a prelude to our main result, we show that every connected graph $G\in \mathcal{S}$ requiring the maximum number of four colors, becomes odd $3$-edge-colorable after removing a certain edge. Thus, we provide support for the mentioned conjecture by proving it for all subdivisions of odd graphs. The paper concludes with few problems for possible further work.
\end{abstract}

\medskip

\noindent \textbf{Keywords:} odd graph, odd edge-coloring, odd chromatic index, subdivision.


\section{Introduction}

\subsection{Basic terminology}

All considered graphs $G=(V(G),E(G))$ are undirected and finite, loops and parallel edges are allowed. We follow~\cite{BonMur08} for any terminology and notation not defined here. The parameters $n(G)=|V(G)|$ and $m(G)=|E(G)|$ are called the \textit{order} and the \textit{size} of $G$, respectively. A graph of order $1$ is \textit{trivial}, and a graph of size $0$ is \textit{empty}. A path or cycle is either \textit{odd} or \textit{even} depending on the parity of its size. A path (resp. an edge) with endvertices $x$ and $y$ is referred to as an \textit{$x$-$y$ path} (resp. an \textit{$x$-$y$ edge}). Given a path $P$ and vertices $x,y\in V(P)$, the $x$-$y$ subpath of $P$ is denoted $xPy$.
For every vertex $v\in V(G)$, $E_G(v)$ denotes the set of edges incident with $v$, and the size of $E_G(v)$ (every loop being counted twice) is the \textit{degree}, $d_{G}(v)$, of $v$ in $G$. The maximum and minimum vertex degree in $G$ are denoted by $\Delta(G)$ and $\delta(G)$, respectively.  A graph $G$ is \textit{subcubic} if $\Delta(G)\leq 3$. Each vertex $v$ of even (resp. odd) degree $d_{G}(v)$ is an \textit{even} (resp. \textit{odd}) vertex. In particular, if $d_G(v)$ equals $0$ (resp. $1$), we say that $v$ is an \textit{isolated} (resp. \textit{pendant}) vertex of $G$. Any vertex of degree $d$ is called a \textit{$d$-vertex}. A graph is \textit{even} (resp. \textit{odd})
whenever all its vertices are even (resp. odd).
The set of neighboring vertices of $v\in V(G)$ is denoted by $N_G(v)$. For every $u\in N_G(v)$, the edge set $E_G(u)\cap E_G(v)$ is the \textit{$u$-$v$ bouquet} in $G$, with notation $\mathcal{B}_{uv}$.  The maximum size of a bouquet in $G$ is its \textit{multiplicity}, $\mu(G)$. A graph $G$ is \textit{simple} if it is loopless and of multiplicity at most $1$.

For $X\subseteq V(G)\cup E(G)$, $G-X$ is the subgraph of $G$ obtained by removing $X$; we abbreviate $G-\{x\}$ to $G-x$. Similarly, given a subgraph $H\subseteq G$, $H+X$ is the subgraph of $G$ obtained by adding to $H$ all the vertices and edges from $X$. A spanning subgraph of $G$ is also called a \textit{factor} of $G$.

To \textit{split} a vertex $v$ is to replace it by two (not necessarily adjacent) vertices $v'$ and $v''$, and to replace each edge incident to $v$ by an edge incident to either $v'$ or $v''$ (but not both, unless the edge is a loop at $v$), the other end of the edge remaining unchanged. A vertex of positive degree can be split in several ways, so the resulting graph is not unique in general. Another local operation on graph $G$ is to \textit{suppress} a $2$-vertex $v$. The modified graph $G\%v$ is obtained from $G-v$ by adding an edge between the neighbors of $v$ (the new edge is a link unless $N_G(v)$ is a singleton).

The \textit{connectivity}, $\kappa(G)$,  of a graph $G$ is the minimum size of a subset $S\subseteq V(G)$  such that $G-S$ is disconnected or of order $1$. A graph is said to be \textit{$k$-connected} if its connectivity is at least $k$.
A vertex $v\in V(G)$ is a \textit{cutvertex} of $G$ if $G-v$ has more (connected) components than $G$. If $V_1,\ldots,V_k$ are the vertex sets of all components of $G-v$, then for $i=1,\ldots,k$, the induced subgraph $G[V_i\cup \{v\}]$ is called a \textit{$v$-lobe} of $G$.
 A \textit{block graph} is a connected graph without any cutvertices. Given a nontrivial connected graph $G$, a maximal block subgraph is a \textit{block} of $G$. Thus each block is either $2$-connected or a bouquet, and each cycle is entirely within a single block. For a block $B$ of $G$, each vertex $v\in V(B)$ which is not a cutvertex of $G$ is an \textit{internal vertex} of $B$ (and of $G$). The collection of internal vertices of $B$ is denoted by $\mathrm{Int}_G(B)$.
 If $V(B)$ contains at most one cutvertex of $G$ then $B$ is an \textit{end-block}.
 Any connected graph $G$ is associated with a bipartite graph $B(G)$ having bipartition
$(\mathcal{B},\mathcal{V})$, where $\mathcal{B}$ is the set of blocks of $G$ and $\mathcal{V}$ the set of cutvertices of $G$, a
block $B\in \mathcal{B}$ and a cutvertex $v\in \mathcal{V}$ being adjacent in $B(G)$ if and only if $B$ contains
$v$. The graph $B(G)$ is connected and acyclic, the former because $G$ is connected and the latter because a cycle in $B(G)$ would correspond to
a cycle in $G$ passing through two or more blocks.
The graph $B(G)$ is therefore a tree, called the \textit{block-tree} of $G$.
If $\mathcal{V}\neq\emptyset$, the end-blocks of $G$ correspond to the leaves of its block-tree. Every vertex $v$ of a block graph $G$ has a neighbor among the internal vertices of each end-block of $G-v$.

For a nonempty subset $X\subset V(G)$, the \textit{edge cut} $\partial(X)$ is the set of edges with one endvertex in $X$ and the other endvertex in $V(G)\backslash X$; in case $X$ is a singleton, we speak of a \textit{trivial} edge cut $\partial(X)$. The \textit{edge-connectivity}, $\kappa'(G)$, of a nontrivial graph $G$ is the minimum size of a subset $S\subseteq E(G)$ such that $G-S$ is disconnected; equivalently, $\kappa'(G)$ is the minimum size of an edge cut in $G$.  A \textit{$k$-edge cut} is an edge cut of size $k$; a $1$-edge cut is also called a \textit{bridge}. If $vw$ is a bridge and the vertex $w$ is not the only neighbor of the vertex $v$, then $v$ is a cutvertex of $G$. A graph is said to be \textit{$k$-edge-connected} if its edge-connectivity is at least $k$.
A $k$-edge-connected
graph is termed \textit{essentially $(k+1)$-edge-connected} if all of its $k$-edge cuts
are trivial.


\subsection{Odd edge-colorings and odd chromatic index}

An assignment $\varphi: E(G)\to S$ is an \textit{edge-coloring of $G$} with \textit{color set} $S$. If $|S|\leq k$, we speak of a $k$-edge-coloring $\varphi$. The nature of the colors is irrelevant, and it is conventional  to use $S=[k]:=\{1,2,\ldots,k\}$ for a color set of size $k$. For each color $c\in S$, $E_c(G,\varphi)$ denotes the \textit{color class of $c$}, that is, the set $\varphi^{-1}(c)$ of edges colored by $c$. Whenever $G$ and $\varphi$ are clear from the context, we denote the color class of $c$ simply by $E_c$.  Given an edge-coloring $\varphi$ and a vertex $v$ of $G$, we say that a color $c$ \textit{appears at $v$} if $E_c\cap E_G(v)\neq \emptyset$. Any decomposition $\{H_1,\ldots, H_k\}$ of $G$ can alterably be interpreted as a $k$-edge-coloring of $G$ for which the color classes are $E(H_1),\ldots, E(H_k)$.

An \textit{odd edge-coloring} of a graph $G$ is an edge-coloring such that each nonempty color class $E_c$ induces an odd subgraph of $G$. In other words, at each vertex $v$, for any appearing color $c$ the degree $d_{G[E_c]}(v)$ is odd. Equivalently, an odd edge-coloring can be seen as a decomposition of $G$ into
(edge-disjoint) odd subgraphs. As usual, we are most interested in the least number of colors necessary to create such a coloring.
An odd edge-coloring of $G$ using at most $k$ colors is referred to as an \textit{odd $k$-edge-coloring}, and if such a coloring exists we say that $G$ is
\textit{odd $k$-edge-colorable}. Whenever $G$ admits an odd edge-coloring, the \textit{odd chromatic index}, $\chi_{o}'(G)$, is
defined to be the minimum integer $k$ for which $G$ is odd $k$-edge-colorable.

It is obvious that a necessary and sufficient condition for odd edge-colorability of $G$ is the absence of vertices incident only to loops. Apart from this, the presence of loops
does not influence the existence nor changes the value of the index $\chi_{o}'(G)$. Therefore, the class of loopless graphs comprises a natural framework for the study of the odd chromatic index.

\begin{figure}[ht!]
	$$
		\includegraphics{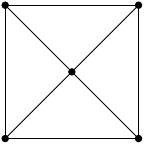}
	$$
	\caption{The wheel $W_4$ is a simple graph with $\chi'_o(W_4)=4$.}
	\label{fig:pyb}
\end{figure}

As a notion, odd edge-coloring was introduced by Pyber in his survey on graph coverings~\cite{Pyb91}.
The mentioned work concerns simple graphs and (among other results) contains a proof of the following.

\begin{theorem}[Pyber, 1991]
	\label{thm:pyb}
	For every simple graph $G$, it holds that
	$\chi_o'(G) \le 4$.
\end{theorem}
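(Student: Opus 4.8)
The plan is to reduce to connected graphs and then split the edge set into a ``stripping part'' handled by two colors and an even remainder handled by two more. Since an odd edge-coloring treats the components independently (color classes living in distinct components are vertex-disjoint and stay odd), it suffices to prove $\chi_o'(G)\le 4$ for connected $G$. The workhorse is the classical parity/$T$-join fact: for a connected graph and any vertex set $S$ of even cardinality there is a subgraph whose odd-degree vertices are exactly those of $S$, and inside a fixed spanning tree such a subgraph is unique. A consequence I would isolate as a lemma is the exact criterion for using only two colors: \emph{an even graph is odd $2$-edge-colorable if and only if each of its components has even order.} Indeed, at a vertex of even degree any present color must have odd degree, so both colors must appear; color class $A$ is then a spanning odd subgraph and its complement is automatically odd, and a spanning odd subgraph exists precisely when the order is even.

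With this in hand, I would let $T$ be the set of odd-degree vertices of $G$ (even by handshaking) and choose a $T$-join $R$ whose odd-degree vertices are exactly $T$; removing $E(R)$ leaves $G':=G-E(R)$ with all degrees even. The aim is to pick $R$ so that it is \emph{odd $2$-edge-colorable} (this gives colors $1,2$) and so that the even remainder $G'$ has even-order components (so the above criterion yields colors $3,4$). The most transparent realization takes $R$ as a linear forest, a disjoint union of paths pairing up the vertices of $T$: alternating two colors along each path makes every endpoint (a $T$-vertex) monochromatic of degree $1$ and every internal vertex bichromatic of degree $1$ in each color, so both classes are odd. Rooting a spanning tree at a \emph{leaf} and using the uniqueness of tree $T$-joins is the device I would use to keep the exceptional vertex isolated in $R$, so that $R$ is a genuine odd subgraph rather than one spoiled by an even positive degree at the root.

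The step I expect to be the main obstacle is controlling the \emph{order parities} of the components of $G'$ and, relatedly, the $K_5$-type obstruction. An even graph need not be odd $2$-edge-colorable: $K_5$ is $4$-regular of odd order, and a direct count on the degrees $1$ and $3$ shows it admits no partition into two odd subgraphs, so a component of $G'$ of odd order genuinely forces a third color there and threatens to push the total to five. Resolving this requires that the chosen $T$-join $R$ also repair the order parities of the pieces it creates, and that at the unavoidable high-degree vertices the two colors split the incident edges as $1+(\text{odd})$ rather than $2+2$ (a free $2$-coloring making each present color odd, not a naive alternation). Balancing these constraints simultaneously, while guaranteeing no fifth color is ever needed, is the crux of the argument; the tight and delicate cases are exactly the near-regular even blocks such as $K_5$, which is also why the bound is $4$ rather than $3$.
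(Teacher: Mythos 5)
Your proposal stops exactly where you say it does---the ``balancing'' of the component parities of $G'$ is announced as the crux and never carried out---but the problem is worse than an unfinished step: the architecture itself provably fails. In your scheme the colors $1,2$ are confined to a $T$-join $R$ (with $T$ the set of odd-degree vertices) and the colors $3,4$ to the even remainder $G'=G-E(R)$, which by your own two-color criterion must have all of its nontrivial components of even order. Now test this on $K_5$, the very example you single out as delicate. There $T=\emptyset$, so $R$ is itself an even subgraph, and your criterion applies to $R$ as well as to $G'$: both must be even graphs whose nontrivial components have even order. But an even graph on at most $5$ vertices with that property is a disjoint union of $4$-cycles (an even nontrivial component of order $4$ is $2$-regular, and order $2$ is impossible), hence contains at most one $4$-cycle and at most $4$ edges. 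This gives $|E(R)|+|E(G')|\le 8<10=|E(K_5)|$, a contradiction; taking $R=\emptyset$ fails too, since then $G'=K_5$ is connected of odd order. So \emph{no} choice of $T$-join makes your $2+2$ split work for $K_5$, even though $\chi_o'(K_5)=3$ (e.g.\ the classes $K_4$, $K_{1,3}$ and a single edge): a correct proof must let the four classes interleave on overlapping parts of the graph rather than keep $\{1,2\}$ and $\{3,4\}$ on complementary edge sets. A smaller slip: a $T$-join that is a linear forest need not exist (in $K_{1,3}$ with $T$ all four vertices, the unique $T$-join is the whole star), though this one is harmless because every forest is odd $2$-edge-colorable (Proposition~\ref{forest}).

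For comparison: this paper does not reprove Theorem~\ref{thm:pyb}---it is quoted from Pyber's survey---but the route encoded in its preliminaries is the standard one, and it uses a $1+2\,(+1)$ split instead of your $2+2$: a connected graph of even order contains a spanning odd co-forest, which receives a single color while its complementary forest receives two (Propositions~\ref{forest} and~\ref{even order} give $\chi_o'\le 3$ in the even-order case); the odd-order case is then handled through a distinguished vertex or edge, with the fourth color reduced to a tiny exceptional class (cf.\ Proposition~\ref{co-forest}, Corollary~\ref{odd co-forest} and Theorem~\ref{odd 4-edge-colorability}). Your closing observation that high-degree vertices need a $1+(\text{odd})$ rather than a $2+2$ split of their incident edges is exactly the flexibility the co-forest approach builds in from the start; rebuilding your argument around ``one color on an odd spanning subgraph, two on a forest complement'' is the repair I would suggest.
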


Pyber observed that the established upper bound is realized by the wheel on four spokes $W_4$ (see Figure~\ref{fig:pyb}). However, this upper bound of four colors does not apply to the class of all looplees graphs $G$. For instance, Figure~\ref{fig:shan} depicts four graphs with the following characteristic property: each of their odd subgraphs is of order $2$ and size $1$, that is, a copy of $K_2$. Consequently, for each of these graphs the odd chromatic index equals the size.

\begin{figure}[ht!]
	$$
		\includegraphics{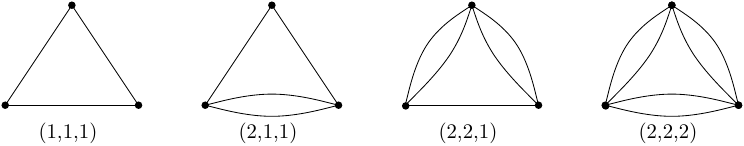}
	$$
	\caption{Four Shannon triangles (the smallest one of each type).}
	\label{fig:shan}
\end{figure}

As defined in~\cite{LuzPetSkr15}, a \textit{Shannon triangle} is a loopless graph on three pairwise adjacent vertices. And if $p,q,r$ are parities of the sizes of its bouquets in non-increasing order, with $2$ (resp. $1$) denoting an even-sized (resp. odd-sized) bouquet, then $G$ is a Shannon triangle of \textit{type $(p,q,r)$}. Figure~\ref{fig:shan} depicts (from left to right) the smallest, in terms of size, Shannon triangle of type $(1,1,1)$, $(2,1,1)$, $(2,2,1)$, and $(2,2,2)$, respectively. It is straightforward that if $G$ is a Shannon triangle of type $(p,q,r)$, then
\begin{equation}
    \label{eqn:shan}
\chi'_{o}(G)=p+q+r\,.
\end{equation}
The main result of~\cite{LuzPetSkr15} tells that six colors suffice for an odd edge-coloring of any loopless graph. Furthermore, it characterizes when six colors are necessary.
\begin{theorem}
    \label{char:6}
    For every connected loopless graph $G$, it holds that
	$\chi_{o}'(G) \leq 6$.
	Moreover, equality is attained if and only if $G$ is a Shannon triangle of type $(2,2,2)$.
\end{theorem}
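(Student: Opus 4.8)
The statement has two parts: the universal bound $\chi_o'(G)\le 6$ and the characterization of equality, and I would dispose of the easy inclusion first. If $G$ is a Shannon triangle of type $(2,2,2)$, then $\chi_o'(G)=6$ is immediate from \eqref{eqn:shan}. The observation underlying \eqref{eqn:shan} is that on three mutually adjacent vertices any odd subgraph must consist of edges from a single bouquet: were an odd subgraph to meet all three vertices, summing the three (odd) degrees would give an odd number, yet that sum equals twice the number of used edges, a contradiction. Hence each bouquet is colored independently, and an even bouquet forces at least two classes. This both yields \eqref{eqn:shan} and shows that reaching the value $6$ on three vertices is possible only when all three bouquets are even, i.e. type $(2,2,2)$; so for the converse it remains to prove the bound and to argue that no graph outside this family attains it.

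For the bound $\chi_o'(G)\le 6$ I would argue by induction on $m(G)$, strengthening the hypothesis so as to also pin down the extremal case --- for instance carrying along the assertion that a sixth color is needed only for type $(2,2,2)$, and (to smooth the induction, in the spirit of the ``$\le 2$'' remark in the abstract) that some color class can be kept small. The inductive engine is a battery of local reductions that strictly decrease the size while allowing a coloring to be pulled back: (i) if $G$ has a cut-vertex, color each $v$-lobe separately and merge, permuting color names at the cut-vertex so that no color appears there with even total degree; (ii) suppress a $2$-vertex $v$ with neighbors $x\ne y$, noting that in any odd edge-coloring the two edges at $v$ must receive distinct colors, so a coloring of $G\%v$ lifts once the colors of the split edge are reassigned consistently at $x$ and $y$; (iii) split an even vertex of degree $\ge 4$ into two positive-degree vertices while keeping $G$ connected; and (iv) delete a pair of parallel edges from an oversized bouquet, treating that even two-edge subgraph separately. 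Iterating (i)--(iv) reduces $G$ to an irreducible core of bounded order with $\delta(G)\ge 3$ except in forbidden positions --- essentially the Shannon triangles together with a short list of small dense graphs.

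On these bounded-size cores I would verify the bound directly. A uniform tool is the decomposition $G=J\cup G'$, where $J$ is an edge-minimal $T$-join for the set of odd vertices of $G$ (hence a forest) and $G'=G-E(J)$ is an even graph; being a forest, $J$ splits into matchings by $2$-coloring the edges along each path, each matching being an odd subgraph, while $G'$ decomposes into cycles whose edges distribute into a few odd classes. Since the core has bounded size, even this crude decomposition stays within six colors, and a short case analysis forces the count up to $6$ only in the presence of three even bouquets. Combined with \eqref{eqn:shan}, this isolates type $(2,2,2)$ as the unique equality case.

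The main obstacle is the parity bookkeeping in reductions (i)--(iv): each pull-back alters the colors incident to one or two vertices, and I must guarantee that these local changes keep every class odd without spending a seventh color and without either manufacturing or erasing a $(2,2,2)$ obstruction in the smaller graph. Step (i) is the delicate one, since a color shared by two lobes contributes an even degree at the cut-vertex; resolving this cleanly is exactly why I would strengthen the induction hypothesis to keep one color class controllable, so that colors can always be permuted at the cut-vertex to restore oddness. I expect the merging step together with the $2$-vertex suppression to carry essentially all of the real work.
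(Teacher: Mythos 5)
Your equality direction is sound: the handshake argument confining every color class of a Shannon triangle to a single bouquet correctly yields \eqref{eqn:shan}, hence $\chi_o'(G)=6$ for type $(2,2,2)$. (For calibration: the present paper does not prove Theorem~\ref{char:6} at all --- it imports it from \cite{LuzPetSkr15}, where the bound is obtained via $T$-joins and spanning odd co-forests, i.e.\ the facts $(1)$--$(3)$ of Section~2, not via a reduction scheme.) The upper bound in your proposal, however, rests on pull-back steps that fail exactly where the theorem is hard. Step (ii) is the clearest: an odd edge-coloring of $G\%v$ does \emph{not} lift to $G$ by ``reassigning the colors of the split edge consistently at $x$ and $y$''. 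Recoloring one of the two subdivided edges from $c$ to $c'$ flips the parities of the classes $c$ and $c'$ at one endpoint, and repairing this is not bookkeeping: subdividing a single edge of $K_{3,3}$ (an odd graph, so $\chi_o'=1$ with the monochromatic coloring) produces a graph with $\chi_o'=4$ by Theorem~\ref{thm:characterizationsubcubic}, so the index can jump by three under one suppression-reversal, and no local-reassignment argument can be valid; the statement ``$\chi_o'(G\%v)\le 6\Rightarrow\chi_o'(G)\le 6$'' is essentially as hard as the theorem itself. Step (i) has the same parity defect: in an odd coloring of a lobe, every color appearing at the cut-vertex $v$ appears there with \emph{odd} degree, so after merging, any color shown at $v$ by two lobes has even degree at $v$; you would therefore need pairwise disjoint palettes at $v$, which six colors cannot guarantee --- if two lobes are Shannon triangles of type $(2,2,2)$ meeting at $v$, each forces at least four distinct colors at $v$ (the two even bouquets at $v$ each need two classes, and no class can straddle two bouquets), so the palettes overlap and the merge breaks; indeed valid colorings of such glued graphs use classes crossing $v$ between lobes, so lobewise coloring is the wrong frame. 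Step (iv) is also illegal as written: a pair of parallel edges is an \emph{even} subgraph and can never be a color class, so the deleted pair must be reinserted with two colors of the right parity at both endpoints --- the same difficulty again.

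Even granting the reductions, the endgame is circular. After (ii) and (iii) an ``irreducible core'' has no even vertices at all, i.e.\ it is an odd graph with $\chi_o'=1$: no difficulty resides in the cores, and there is no bounded ``short list'' to verify --- all the content was displaced into the unproved pull-backs. Moreover your fallback $T$-join decomposition is false precisely where it matters: for an even graph the set $T$ of odd vertices is empty, the minimal $T$-join $J$ is empty, and $G'=G$; the Shannon triangle of type $(2,2,2)$ is itself a connected even graph needing six classes, so ``the cycles of $G'$ distribute into a few odd classes'' cannot be rescued by boundedness, since boundedness was never established. A workable route along the paper's own toolkit is quite different: if $n(G)$ is even, a spanning odd co-forest (fact $(3)$) colored $1$ plus Proposition~\ref{forest} on the complementary forest gives $\chi_o'(G)\le 3$ outright (this is Proposition~\ref{even order}); the genuine work is the case of odd order and $2$-connectivity, handled in \cite{LuzPetSkr15} by a careful analysis around a single even vertex, for which your proposal offers no substitute.
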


Recently, the following improvement of Theorems~\ref{thm:pyb} and~\ref{char:6} has been shown in~\cite{Pet18}.

\begin{theorem}
 \label{odd 4-edge-colorability}
 Let $G$ be a connected loopless graph that is not a Shannon triangle of type $(2,2,1)$ or $(2,2,2)$. Then $G$ admits an odd edge-coloring with color set $\{1,2,3,4\}$ such that the color class $E_4$ satisfies two additional conditions:
\begin{enumerate}
\item[$(i)$] $|E_4|\in\{0,1,2\}$, and if $|E_4|=2$ then the pair of edges colored by $4$ are at distance $2$ (i.e., are second-neighbors in the line graph);
\item[$(ii)$] if $\mathcal{B}_{xy}\cap E_4\neq \emptyset$ then another common color (besides $4$) appears at $x$ and $y$.
\end{enumerate}

\end{theorem}

\begin{figure}[ht!]
	$$
		\includegraphics{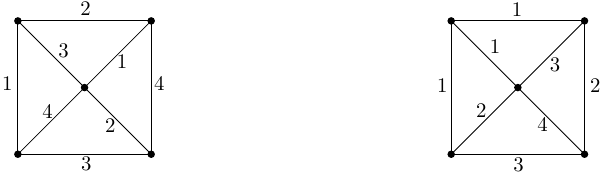}
	$$
	\caption{Two odd edge-colorings of $W_4$ that satisfy conditions $(i)$ and $(ii)$ from Theorem~\ref{odd 4-edge-colorability}. In the coloring depicted on the left, $|E_4|=2$ and  at both endvertices of any edge colored by $4$ either the color $2$ or the color $3$ appears. In the coloring depicted on the right, $|E_4|=1$ and  at both endvertices of the only edge colored by $4$ each of the colors $2$ and $3$ occurs.}
	\label{fig:wheel}
\end{figure}

It is not known whether there exists a connected graph $G$ with $\chi'_o(G)=4$ that does not admit an odd $4$-edge-coloring with a color class of size $1$. In this regard, the following has been conjectured in~\cite{PetSkr21}.

\begin{conjecture}
    \label{conj:finiteness}
Every connected graph $G$ with $\chi'_o(G)=4$ becomes odd $3$-edge-colorable by removing a particular edge.
\end{conjecture}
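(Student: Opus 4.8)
The plan is to use Theorem~\ref{odd 4-edge-colorability} to compress the whole problem into a single stubborn configuration and then to attack that configuration by local recoloring. First I would record an equivalent reformulation: a connected graph $G$ with $\chi'_o(G)=4$ becomes odd $3$-edge-colorable after deleting some edge $e$ if and only if $G$ admits an odd $4$-edge-coloring whose class $E_4$ has size exactly $1$. Indeed, given an odd $3$-edge-coloring of $G-e$, recoloring $e$ with a fourth color produces such a coloring, and conversely deleting the unique edge of $E_4$ restores an odd $3$-edge-coloring. Thus I may work entirely inside $G$ and aim to shrink $E_4$ to a single edge.

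Next I would invoke Theorem~\ref{odd 4-edge-colorability}. Since a Shannon triangle of type $(2,2,1)$ or $(2,2,2)$ has odd chromatic index $5$ or $6$ by~\eqref{eqn:shan}, the hypothesis $\chi'_o(G)=4$ guarantees that $G$ is neither exception, so the theorem supplies an odd $4$-edge-coloring $\varphi$ with $|E_4|\in\{0,1,2\}$. The value $0$ is impossible, as it would witness $\chi'_o(G)\le 3$, and if $|E_4|=1$ there is nothing to prove. Hence the entire difficulty lies in the case $|E_4|=2$, where the goal becomes: starting from $\varphi$ with $E_4=\{e_1,e_2\}$, recolor to reach a coloring with $|E_4|=1$.

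For this case I would exploit the extra information from Theorem~\ref{odd 4-edge-colorability} that $e_1$ and $e_2$ are at distance $2$ in the line graph: they are vertex-disjoint, say $e_1=x_1y_1$ and $e_2=x_2y_2$, and are joined by an edge $f$ adjacent to both, so after relabeling $f=y_1x_2$ with $x_1,y_1,x_2,y_2$ distinct. Writing $c=\varphi(f)\in\{1,2,3\}$, the core move is the three-edge swap sending $\varphi(e_1),\varphi(e_2)$ from $4$ to $c$ and $\varphi(f)$ from $c$ to $4$. A direct parity check shows that every color-degree is preserved at the inner vertices $y_1$ and $x_2$, since each merely trades an $e_i$ of color $4$ for $f$ of color $4$ and, dually, $f$ of color $c$ for $e_i$ of color $c$. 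The only places where oddness can break are the outer endpoints $x_1$ and $y_2$: there color $4$ disappears and the parity of color $c$ flips. Consequently, if $c$ is absent at both $x_1$ and $y_2$, the swap yields a valid coloring with $E_4=\{f\}$ and we are done; the residual task is to repair those vertices among $\{x_1,y_2\}$ at which color $c$ now appears with even degree.

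The hard part is precisely this repair. To fix a defect at $x_1$ I would run a Kempe-type $(c,c')$-alternating chain from $x_1$ through the odd subgraph induced by $E_c\cup E_{c'}$, rerouting one unit of color $c$ into $c'$ so as to restore oddness without disturbing the rest of the coloring, while respecting condition $(ii)$ of Theorem~\ref{odd 4-edge-colorability} at the bouquet $\mathcal{B}_{y_1x_2}$. I expect this to be the genuine obstacle: in a general graph such a chain may close into a cycle, may terminate at the second defect $y_2$ and merely relocate the problem, or may collide with the parity constraints already carried by the endpoints of $E_4$, and there is no evident mechanism forcing a single deletion to succeed every time. This is why I would not expect the tools at hand to close the argument in full generality; the same analysis, however, explains why the statement is accessible on structured families — above all the subdivisions of odd graphs, whose abundance of degree-$2$ vertices furnishes long, freely recolorable chains that absorb the boundary defects.
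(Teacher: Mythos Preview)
The statement you are attempting is \emph{Conjecture}~\ref{conj:finiteness}; the paper does not prove it. It is explicitly left open: the authors write that ``it is not known whether there exists a connected graph $G$ with $\chi'_o(G)=4$ that does not admit an odd $4$-edge-coloring with a color class of size $1$.'' What the paper does establish is the conjecture restricted to the class $\mathcal{S}$ of subdivisions of odd graphs, and this is done not by anything resembling your swap-and-Kempe-repair scheme but directly via Proposition~\ref{4-edge-colorability}: for $G\in\mathcal{S}$ with $\chi'_o(G)=4$, every $2$-vertex $v$ is internal, so deleting one of its incident edges leaves a connected graph with a pendant vertex, and Proposition~\ref{even order} (through $\kappa'=1$) gives an odd $3$-edge-coloring of the remainder.

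Your outline is a sensible attack on the full conjecture, and your reformulation and reduction to the case $|E_4|=2$ via Theorem~\ref{odd 4-edge-colorability} are correct. But the ``hard part'' you identify is genuinely hard and you do not close it: the $(c,c')$-alternating chain from $x_1$ need not exist as a path terminating harmlessly; it may return to $x_1$, hit $y_2$, or interact with $y_1,x_2$ in a way that reintroduces a second color-$4$ edge elsewhere. You yourself say you ``would not expect the tools at hand to close the argument in full generality,'' and that is accurate --- what you have written is a proof sketch that stops exactly where the open problem begins. So there is no contradiction with the paper: you have not found a proof the authors missed, and there is no paper proof of the general statement to compare against.
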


Regarding odd $2$-edge-colorability of graphs, Kano et al.~\cite{KanKatVar18} have shown the following.

\begin{theorem}
    \label{Kano}
The decision problem whether a given graph $G$ is odd $2$-edge-colorable is solvable in polynomial time. Moreover, in the affirmative case, such a coloring can be found in polynomial time.
\end{theorem}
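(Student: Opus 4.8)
The plan is to reduce the decision problem to testing the consistency of a linear system over $\mathrm{GF}(2)$, which is solvable in polynomial time by Gaussian elimination; the same computation produces an explicit solution, and hence a coloring, whenever one exists. I would regard an odd $2$-edge-coloring as a partition of $E(G)$ into two classes (one possibly empty) and introduce, for each edge $e$, a variable $x_e\in\mathrm{GF}(2)$ recording its class. The coloring is odd exactly when, at every vertex $v$, each color that appears at $v$ appears an odd number of times, so the whole task is to translate this local condition into equations in the $x_e$.

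The crucial point is the dichotomy between odd and even vertices. If $v$ is an odd vertex and both colors appeared at $v$, the two class-degrees would be odd and would sum to the odd number $d_G(v)$, which is impossible; hence every odd vertex is forced to be monochromatic, i.e. $x_e=x_f$ for all $e,f\in E_G(v)$. These are linear equations $x_e+x_f=0$, and for a vertex of degree $d$ it suffices to link one fixed incident edge to the remaining $d-1$. If instead $v$ is an even vertex, then a single color cannot appear alone, since its class-degree would equal the even number $d_G(v)$; thus both colors must appear, and because $d_G(v)$ is even the two class-degrees are both odd precisely when one of them is, a condition captured by the single affine equation $\sum_{e\in E_G(v)}x_e=1$. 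Assembling these over all vertices gives an affine system with $m(G)$ variables and $O(n(G)+m(G))$ equations.

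I would then verify that the system exactly characterizes odd $2$-edge-colorings. Necessity is the argument above; for sufficiency, any assignment satisfying all equations yields a coloring in which each odd vertex sees one color with odd multiplicity $d_G(v)$ and the other not at all, while each even vertex sees both class-degrees odd, so every nonempty color class induces an odd subgraph. Consequently $G$ is odd $2$-edge-colorable if and only if the system is consistent, and Gaussian elimination over $\mathrm{GF}(2)$ decides this and returns a solution in polynomial time, from which the two color classes are read off directly. Loops are harmless, as a loop contributes $2$ to any class-degree and therefore changes no parity, while a vertex incident only to loops is exactly the degenerate case excluded by odd-colorability.

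The step I expect to demand the most care is the sufficiency direction: confirming that forcing agreement among the incident edges at each odd vertex, together with the parity equation at each even vertex, introduces no spurious solutions and in particular never drives an even vertex into a single color. Once this equivalence is nailed down, the polynomial-time bound is immediate from the standard complexity of solving linear systems over a finite field, and both assertions of the theorem follow at once.
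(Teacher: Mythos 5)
First, a point of orientation: the paper does not prove Theorem~\ref{Kano} at all --- it is quoted from Kano, Katona and Varga~\cite{KanKatVar18} --- so there is no internal proof to compare yours against. Judged on its own, your reduction to an affine system over $\mathrm{GF}(2)$ is correct and self-contained: at an odd vertex two odd class-degrees would sum to an odd number, forcing monochromaticity (and since $E_G(v)$ includes loops, your equalities $x_e=x_f$ correctly force any loop at an odd vertex into the common class); at an even vertex of positive degree a lone color would have even degree, and parity of one class-degree determines the other, so a single affine equation suffices; consistency is then decidable, and a witness produced, by Gaussian elimination. This is a genuinely different route from the cited source, which proceeds via a structural characterization of graphs decomposable into two odd subgraphs and extracts the algorithm from it. What the linear-algebraic route buys is brevity and a uniform polynomial bound; what it does not buy is structural insight --- in particular it does not by itself yield cycle-parity criteria of the kind the present paper actually needs for $\mathcal{S}$ (Corollary~\ref{odd 2-edge-colorability}), which is presumably why the structural statement, not the complexity one, does the work in Section~3.

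One detail deserves a sharper formulation than you give it. In the even-vertex equation, a loop must enter with coefficient $2\equiv0$, i.e.\ the sum should run over the non-loop edges at $v$ only. As literally written, $\sum_{e\in E_G(v)}x_e=1$ counts a loop once, and then (for instance, at an even vertex with one loop and two links) the assignment coloring the loop alone by color $1$ and both links by color $2$ satisfies the equation while color $1$ appears at $v$ with even degree $2$ --- a spurious solution, since ``changes no parity'' does not prevent a loop from making a color \emph{appear}. Your closing remark shows you intend the coefficient-$2$ convention, and with it the equivalence is sound: at an even vertex both class-degrees are odd on the links alone, so loops may be colored freely there. The cleanest formalization is the one the paper itself gestures at in Section~1.2: delete all loops, run your system on the resulting loopless graph, and afterwards color each loop by a color already appearing at its vertex, which preserves all parities. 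With that repair the proposal is complete.
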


In view of Theorem~\ref{odd 4-edge-colorability}, the decision problem whether a given graph $G$ is odd $4$-edge-colorable is also solvable in polynomial (in fact, linear) time. Moreover, its proof can be used as an efficient algorithm for exhibiting such a coloring. The analogous complexity questions regarding odd $3$-edge-colorability of general graphs are still open. Nevertheless, these questions have been answered (in the affirmative) for subcubic graphs. Namely, a complete characterization of the class of loopless subcubic graphs in terms of their odd chromatic index was obtained in~\cite{AtaPetSkr16} through the following:

\begin{theorem}
    \label{thm:characterizationsubcubic}
Let $G$ be a connected loopless subcubic graph. Then
\begin{equation*}
\chi'_o(G)=
\begin{cases}
0 & \text{\quad if\, } G \text{ is empty}\,;\\
1 & \text{\quad if \,} G \text{ is odd} \,;\\
2 & \text{\quad if \,} G \text{ has 2-vertices, with an even number of them on each cycle}\,;\\
4 & \text{\quad if \,} G \text{ is obtainable from a cubic bipartite graph by a single edge subdivision}\,;\\
3 & \text{\quad otherwise}\,.
\end{cases}
\end{equation*}
\end{theorem}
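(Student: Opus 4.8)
The plan is to treat the five cases in increasing order of difficulty, relying throughout on the local structure of an odd edge-coloring at a subcubic vertex: at a $1$-vertex the incident edge may take any color; at a $2$-vertex the two incident edges must receive \emph{distinct} colors; and at a $3$-vertex the three incident edges are either all equal (\emph{monochromatic}) or pairwise distinct (\emph{rainbow}), since $3=3$ and $3=1+1+1$ are the only partitions of $3$ into odd parts. The values $\chi'_o(G)=0$ and $\chi'_o(G)=1$ are immediate: the empty graph needs no color, while a single color class is an odd subgraph covering all of $E(G)$ exactly when the connected graph $G$ is odd. For the blanket upper bound $\chi'_o(G)\le 4$ I would invoke Theorem~\ref{odd 4-edge-colorability}, noting that the two excepted Shannon triangles, of type $(2,2,1)$ and $(2,2,2)$, each contain a vertex of degree $4$ and hence are not subcubic. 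Thus for every connected loopless subcubic $G$ that is neither empty nor odd we already know $2\le\chi'_o(G)\le 4$, and it remains only to separate the three values.

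First I would pin down when $\chi'_o(G)=2$. Fixing two colors, the local rules say that along any cycle $C$ the color must \emph{switch} at each $2$-vertex of $C$ and stay constant at each $3$-vertex; returning to the start then forces the number of $2$-vertices on $C$ to be even, which proves necessity. For sufficiency I would read the requirement ``distinct at $2$-vertices, equal at $3$-vertices'' as an affine system over $\mathbb{Z}_2$ with one variable per edge, whose only potential obstructions are the cycles of $G$; when every cycle carries an even number of $2$-vertices the system is consistent and any solution is an odd $2$-edge-coloring. Since a non-odd graph cannot be odd $1$-edge-colored, such a $G$ indeed has $\chi'_o(G)=2$ precisely in this case. (Having excluded odd graphs, $G$ automatically possesses a $2$-vertex, as otherwise all its degrees would be $1$ or $3$.)

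The heart of the argument is the lower bound for the exceptional family. Suppose $G$ arises from a cubic bipartite graph $H$, with parts $X\ni a$ and $Y\ni b$, by subdividing the edge $ab$ with a new $2$-vertex $w$; I claim $G$ admits no odd $3$-edge-coloring. Assume one exists with color set $\{1,2,3\}$, regarded in $\mathbb{Z}_3$. By the local structure, at every $3$-vertex the three incident colors are either $\{1,2,3\}$ or $\{c,c,c\}$, and in both cases their sum is $\equiv 0\pmod 3$. Every vertex of $X$ is a $3$-vertex of $G$, so $\sum_{v\in X}(\text{color-sum at }v)\equiv 0\pmod 3$. On the other hand, since $H$ is bipartite, each edge of $G$ meets $X$ exactly once, the sole exception being $wb$; hence this quantity equals $\Sigma-\varphi(wb)$, where $\Sigma$ is the sum of all edge-colors. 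Therefore $\Sigma\equiv\varphi(wb)\pmod 3$, and the symmetric computation over $Y$ yields $\Sigma\equiv\varphi(aw)\pmod 3$. Consequently $\varphi(aw)\equiv\varphi(wb)\pmod 3$; but $aw$ and $wb$ are the two edges at the $2$-vertex $w$, which must receive distinct colors of $\{1,2,3\}$, and these are pairwise incongruent modulo $3$ — a contradiction. With $\chi'_o(G)\le 4$ this gives $\chi'_o(G)=4$ for the entire family. (Incidentally, cubic bipartite graphs are bridgeless by König's theorem, so $w$ lies on a cycle carrying exactly one $2$-vertex; these graphs are therefore not odd $2$-edge-colorable, consistently with $\chi'_o=4$.)

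It remains to show that every connected loopless subcubic $G$ which is neither empty, odd, nor odd $2$-edge-colorable, and which does \emph{not} lie in the exceptional family, satisfies $\chi'_o(G)\le 3$; this is where I expect the real work to lie. I would argue by induction on $|E(G)|$ through a minimal counterexample, establishing successively that it has no pendant vertex (a leaf edge can be re-added in a color not yet present at its non-leaf end), no cut-vertex or bridge (color the lobes separately and glue at the shared vertex, adjusting parities), and then reducing the number of $2$-vertices by suppression and local recoloring. Since a genuinely cubic graph is itself odd, the only hard irreducible configuration is a $2$-connected cubic graph with a single subdivided edge; there one splits on whether the underlying cubic graph is bipartite — the bipartite case being exactly the exceptional family already handled, and the non-bipartite case admitting an odd $3$-edge-coloring built from a near-proper $3$-edge-coloring whose lone defect is absorbed along an odd cycle. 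The delicate point throughout, and the main obstacle, is that each reduction must be carried out without being forced into or out of the exceptional family: one must check that whenever a reduced graph happens to be a single-edge subdivision of a cubic bipartite graph (or odd, or $2$-colorable) the coloring still extends to $G$, and conversely that the \emph{only} irreducible obstruction to three colors is precisely the cubic-bipartite-subdivision structure isolated by the $\mathbb{Z}_3$ argument. Careful bookkeeping of these boundary cases, rather than any single trick, is what makes this direction laborious.
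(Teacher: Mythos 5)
Your treatment of the cases $\chi'_o\in\{0,1,2\}$ and, above all, of the lower bound for the exceptional family is correct. The modulo-$3$ computation is a clean rendition of precisely the ``straightforward double-counting argument'' that the paper attributes to~\cite{AtaPetSkr16}: at a $3$-vertex the incident colors are monochromatic or rainbow, both with color-sum $\equiv 0 \pmod 3$, and summing over the two sides $X$ and $Y$ of the bipartition forces $\varphi(aw)\equiv\varphi(wb)\pmod 3$, contradicting the $2$-vertex condition at $w$. The $2$-color characterization is also sound, though your sufficiency claim (consistency of the affine system over the two-element field) is asserted rather than proved; since every connected loopless subcubic graph lies in $\mathcal{S}$, you could instead quote Corollary~\ref{odd 2-edge-colorability} verbatim. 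Your appeal to Theorem~\ref{odd 4-edge-colorability} for the blanket bound $\chi'_o\le 4$, noting that both excepted Shannon triangles contain a $4^{+}$-vertex, is fine.

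The genuine gap is the final direction --- that every remaining graph satisfies $\chi'_o(G)\le 3$ --- which is the bulk of the theorem and which you present only as an induction plan with admittedly unresolved boundary cases. Worse, the one concrete mechanism you name for the hard irreducible configuration would fail: for a non-bipartite $2$-connected cubic graph that is class~2 (e.g.\ the Petersen graph) no proper $3$-edge-coloring exists, so a ``near-proper $3$-edge-coloring whose lone defect is absorbed along an odd cycle'' is not an available object, and subdividing an edge does not create one. The working route, visible in this paper's own machinery, is different: in a subcubic graph every cut-vertex is incident with a bridge, so Proposition~\ref{even order} already yields $\chi'_o\le 3$ unless $G$ is $2$-connected of odd order; Proposition~\ref{sufficient condition 3-edge-colorability} (built on odd co-forests, i.e.\ $T$-joins, not on proper colorings) gives $\chi'_o\le 3$ as soon as some cycle through a $2$-vertex is even or carries a second $2$-vertex --- and in a $2$-connected graph any two vertices lie on a common cycle, so a $4$-chromatic $G$ has a unique $2$-vertex $v$ with every cycle through $v$ odd; finally, the two-disjoint-paths argument of Proposition~\ref{block graph} shows that an odd cycle avoiding $v$ would produce an even cycle through $v$, whence $G\%v$ is cubic and bipartite --- exactly your exceptional family, whose $4$-chromaticity your modulo-$3$ count then certifies. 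Replacing your minimal-counterexample sketch by this chain closes the gap without any of the delicate bookkeeping you foresee, since none of these reductions ever needs to re-enter the exceptional family.
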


In this paper we focus on loopless subdivisions of odd graphs, which are in some sense the least non-odd graphs among all. Let us denote this collection by $\mathcal{S}$, and similarly, let $\mathcal{O}$ be the class of loopless odd graphs, with the understanding that $\mathcal{O}\subset\mathcal{S}$. Our main result,  Theorem~\ref{thm} at the very end of Section~$4$, is a characterization of the members of $\mathcal{S}$ in terms of the value of their odd chromatic index. Thus, we achieve a generalization of Theorem~\ref{thm:characterizationsubcubic}, and at the same time answer a question raised by the end of \cite{Pet18}. Our findings here also provide support for  Conjecture~\ref{conj:finiteness} over the class $\mathcal{S}$.

\medskip

The rest of the article is divided into four sections. In the next, preliminary one, we collect several `easy' results (most of them previously known). Sections~$3$ and $4$ are devoted to a derivation of our main result - a characterization of $\mathcal{S}$ in terms of the value of the odd chromatic index. The final section briefly conveys some possible directions for further related study.


\section{Preliminaries}

The \textit{edge-complement}, $\widehat{H}$, of a subgraph $H\subseteq G$ is the spanning subgraph $\widehat{H}=G-E(H)$.
A \textit{co-forest} in $G$ is a subgraph whose edge-complement is a forest.
For a graph $G$, let $T$ be an even-sized subset of $V(G)$. Following~\cite{BonMur08}, a spanning subgraph $H$ of $G$ is said to be a \textit{$T$-join} of $G$ if $d_{H}(v)$ is odd for all $v\in T$ and even for all $v\in V(G)\setminus T$. For instance, if $P\subseteq G$ is a nontrivial path with endvertices $x$ and $y$, the spanning subgraph of $G$ with edge set $E(P)$ is an $\{x,y\}$-join of $G$. As another example, every even spanning subgraph is an $\emptyset$-join of $G$. Observe that the symmetric difference of an $S$-join and a $T$-join is an $S\oplus T$-join. (We shall use $\oplus$ to denote both the symmetric difference operation on spanning subgraphs and on sets.) Hence, the symmetric difference, $H\oplus K$, of a $T$-join $H$ and a spanning even subgraph $K$ of $G$ is again a $T$-join. In particular, the removal (resp. addition) of all edges of an edge-disjoint cycle from (resp. to) a $T$-join, produces another $T$-join. Therefore, if a $T$-join of $G$ exists, there also exists such a forest (resp. co-forest). By the handshake lemma, necessary for the existence of a $T$-join is that the intersection of $T$ with the vertex set of every component of $G$ is even-sized, and a straightforward implementation of the  above mentioned facts (see~\cite{Sch03}) is that this condition also suffices. Consequently, given a connected graph $G$ and an even-sized subset $T$ of $V(G)$,
\begin{enumerate}
\item[$(1)$] there exists a $T$-join of $G$ that is a forest;
\item[$(2)$] there exists a $T$-join of $G$ that is a co-forest;
\item[$(3)$] additionally, if $G$ is of even order, then it contains a spanning odd co-forest.
\end{enumerate}

\smallskip

 An edge-coloring $\varphi$ is said to be \textit{odd} (resp. \textit{even}) \textit{at} a vertex $v$ if each color appearing  at $v$ is odd (resp. even). Similarly, we say that $\varphi$ is odd (resp. even) \textit{away from $v$} if $\varphi$ is odd (resp. even) at every vertex $w\in V(G)\backslash\{v\}$, without any assumptions about the behavior of $\varphi$ at $v$ being made.
The following useful result appears in~\cite{Mat06,Pet18}.

\begin{proposition}
    \label{forest}
    Let $v$ be a vertex of a forest $F$. Any local coloring of $E_F(v)$ which uses at most two colors extends to a $2$-edge-coloring of $F$ that is odd away from $v$. In particular, $F$ is odd $2$-edge-colorable.
\end{proposition}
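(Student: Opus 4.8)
The plan is to prove the extension claim by a rooted, layer-by-layer construction, and then read off the ``in particular'' assertion as a special case. First I would isolate the components of $F$ not containing $v$. The condition \emph{odd away from $v$} demands oddness at every vertex of such a component, and each of these components is a tree possessing a leaf $r$; I would root it at $r$, color its unique incident edge with color $1$, and treat $r$ just as $v$ is treated below. Since a leaf has degree $1$, a coloring that is odd away from $r$ is automatically odd at $r$ as well, so these components cause no difficulty. The very same remark delivers the final assertion: applying the statement with $v$ taken to be a leaf and its single edge colored $1$ yields a $2$-edge-coloring that is odd at \emph{every} vertex, the sole unconstrained vertex $v$ being of degree $1$.

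It therefore remains to color the component $T$ containing $v$. I would root $T$ at $v$ and process its vertices in non-decreasing order of distance from $v$. The edges of $E_F(v)$ arrive pre-colored, and by the time any other vertex $w$ is processed the edge joining it to its parent has already received a color; the edges still to be decided at $w$ are exactly those to its children. The key local step is that, \emph{with the parent-edge color fixed}, one can always color the child edges so that $w$ becomes odd. Granting this, and noting that each child edge is constrained only through its endpoint $w$ at this stage while no constraint at all is imposed at $v$, the procedure consistently colors every edge and produces a $2$-edge-coloring that is odd away from $v$.

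The local rule is a short case analysis on $d_F(w)$. If $d_F(w)$ is odd, I color every child edge with the parent's color, so that $w$ is monochromatic of odd degree. If $d_F(w)$ is even, then $w$ is not a leaf (leaves have odd degree $1$) and hence has at least one child; I color exactly one child edge with the color opposite to the parent's and all remaining child edges with the parent's color. A direct count shows both color classes then have odd degree at $w$. In either case $w$ is odd, and each child edge has acquired the color that will already be fixed when its far endpoint is reached in the next layer.

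The point that needs the most care---and the reason a bottom-up or leaf-deletion induction stalls---is the \emph{direction} of propagation. If one fixes the child colors first and only then seeks a color for the parent edge, a vertex whose children already display two nonzero even color classes (say two edges of each color) cannot be corrected by adding a single parent edge, and the construction gets stuck. Working outward from $v$ removes this obstacle, since the parent color is always the quantity already settled and the freedom in the child edges is precisely what the two-case rule needs to force oddness at $w$.
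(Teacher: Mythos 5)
Your argument is correct. Note that the paper itself gives no proof of this proposition --- it is quoted from the literature (\cite{Mat06,Pet18}) --- so there is no in-paper proof to compare against; your top-down greedy from the root is, in any case, the standard argument. The local rule checks out: at a vertex $w$ with parent edge already colored, if $d_F(w)$ is odd the monochromatic choice makes the single appearing color have odd degree $d_F(w)$, while if $d_F(w)$ is even the split gives the parent's color degree $1+(d_F(w)-2)=d_F(w)-1$ (odd) and the opposite color degree $1$, and your observation that an even-degree vertex with a parent has at least one child is what makes the second case executable. Handling the components missing $v$ by rooting at a leaf, and deriving the ``in particular'' clause by taking $v$ itself to be a leaf, are both sound, since oddness away from a $1$-vertex is automatically oddness everywhere. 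The only omissions are trivial: isolated vertices (components with no edges have no leaf to root at, but also nothing to color) and the empty forest; neither affects the proof. Your closing remark about the direction of propagation is apt --- fixing child colors first can indeed leave a vertex with two even nonzero classes that one parent edge cannot repair --- which is exactly why the outward orientation from $v$, leaving all freedom in the child edges, is the right way to run the induction.
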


An immediate consequence of Proposition~\ref{forest} is the result below, which concerns a graph all of whose cycles (if any) share a vertex.
 \begin{proposition}
    \label{consequences:forest}
    If $v$ is a vertex of a graph $G$ such that $G-v$ is a forest, then $G$ admits a $2$-edge-coloring that is odd away from $v$. Additionally, if $d_G(v)$ is odd, then $G$ admits an edge-coloring with color set $\{1,2\}$ that is odd away from $v$ and the color $1$ (resp. $2$) is odd (resp. even) at $v$.
 \end{proposition}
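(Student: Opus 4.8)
The plan is to reduce both assertions to Proposition~\ref{forest} by means of a vertex split, exploiting the fact that $G-v$ being a forest forces every cycle of $G$ (if any) to pass through $v$. For the first assertion I would build an auxiliary \emph{forest} $F$ from $G$ as follows: replace $v$ by a collection of pendant vertices, one for each non-loop edge incident to $v$, so that an edge $vu$ becomes a pendant edge $v_e u$ attached to $u$, and simply delete any loops at $v$. Since attaching pendant edges to the forest $G-v$ produces again a forest, $F$ is a forest; note that parallel $v$-$u$ edges cause no trouble, as each is relocated to its own fresh leaf, and no vertex $w\neq v$ carries a loop (none exist in the forest $G-v$).

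Now I would invoke the ``in particular'' clause of Proposition~\ref{forest}, namely that every forest is odd $2$-edge-colorable, and fix such a coloring $\psi\colon E(F)\to\{1,2\}$. The split only relocates the $v$-endpoints of edges to brand-new leaves, so for each $w\in V(G)\setminus\{v\}$ it sets up a color-preserving bijection between the edges of $G$ incident to $w$ and the edges of $F$ incident to $w$; consequently $d_{G[E_c]}(w)=d_{F[E_c]}(w)$ for every color $c$. Transporting $\psi$ back to $G$ and coloring the loops at $v$ (if any) with, say, color $1$ — which alters no color degree at any vertex other than $v$ — therefore yields a $2$-edge-coloring of $G$ that is odd at every $w\neq v$, i.e.\ odd away from $v$. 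This settles the first claim.

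For the second claim I would refine this coloring by a relabeling, using the parity hypothesis. Since $d_{G[E_1]}(v)+d_{G[E_2]}(v)=d_G(v)$ is odd, exactly one of the two color degrees at $v$ is odd and the other is even. If $E_1$ is the odd one, the coloring already works; otherwise I interchange the names of the two colors, which preserves oddness away from $v$ while making color $1$ odd and color $2$ even at $v$. The only genuinely delicate point is the bookkeeping of the split: verifying that $F$ really is a forest in the presence of parallel $v$-$u$ edges and of loops at $v$, and that the color degrees at the vertices $w\neq v$ are literally unchanged under the split. Once that is in place, the rest is an application of Proposition~\ref{forest} together with the elementary parity observation driving the relabeling.
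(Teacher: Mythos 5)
Your proposal is correct and takes essentially the same route as the paper: the paper's proof likewise splits $v$ into $d_G(v)$ pendant vertices to obtain a forest, invokes Proposition~\ref{forest} for an odd $2$-edge-coloring, re-identifies the pendant vertices, and derives the second assertion from the first via the same parity/relabeling observation (which it leaves implicit with ``it suffices to prove the first part''). The only cosmetic difference is that the paper assumes $G$ loopless at the outset, whereas you handle loops at $v$ and parallel edges explicitly.
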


 \begin{proof}
 We may assume that $G$ is loopless. It suffices to prove the first part. Split $v$ into $k=d_G(v)$ pendant vertices $v_1,\ldots,v_k$ in order to obtain a forest $F$. By Proposition~\ref{forest}, $F$ admits an odd $2$-edge-coloring. Re-identify $v_1,\ldots,v_k$ into $v$ while keeping the colors on all edges. We thus regain $G$ along with a required edge-coloring.
 \end{proof}

As observed in~\cite{Pyb91}, the odd $2$-edge-colorability of forests implies odd $3$-edge-colorability for all connected graphs of even order, which in turn yields odd $3$-edge-colorability for all graphs with edge-connectivity $1$. The following proof comes from~\cite{Pet18}.
\begin{proposition}
     \label{even order}
If $G$ is a connected graph such that $n(G)$ is even or $\kappa'(G)=1$ then $\chi_{o}'(G)\leq 3$.
\end{proposition}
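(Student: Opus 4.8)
The plan is to treat the two hypotheses separately, handle the even-order case first, and then bootstrap from it to the edge-connectivity-one case. For the even-order case the argument is short: since $G$ is connected of even order, fact $(3)$ above supplies a spanning odd co-forest $H$, and I assign color $1$ to all of $E(H)$. Because $H$ is odd, color $1$ already induces an odd subgraph. Its edge-complement $F=G-E(H)$ is by definition a forest, so Proposition~\ref{forest} lets me odd $2$-edge-color it with colors $2$ and $3$. The three classes together form an odd $3$-edge-coloring, giving $\chi_o'(G)\le 3$.

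For $\kappa'(G)=1$ I would fix a bridge $uv$ and let $G_1,G_2$ be the components of $G-uv$ containing $u$ and $v$, respectively. If $n(G)$ is even we are already done, so assume $n(G)$ is odd; then exactly one of $n(G_1),n(G_2)$ is odd, and after possibly swapping the roles of $u$ and $v$ I may assume $n(G_1)$ is odd and $n(G_2)$ is even. Form $G_1^+$ from $G_1$ by attaching the bridge $uv$ as a pendant edge (so $v$ becomes a pendant vertex); then $G_1^+$ is connected of even order $n(G_1)+1$, hence odd $3$-edge-colorable by the even-order case. Since colors may be permuted on the $G_1$-side independently of the rest, I arrange that the bridge receives color $3$. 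This colors $E(G_1)\cup\{uv\}$ so that the coloring is odd at $u$ and at every internal vertex of $G_1$, because the edges incident to $u$ in $G$ are precisely those incident to $u$ in $G_1^+$. It then remains to color the even-order graph $G_2$ so that the result is odd at every vertex of $G_2$, including $v$, where the already-colored bridge contributes one extra edge of color $3$.

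This coordination at $v$ is the crux: coloring $G_2$ by the even-order recipe naively could let the extra color-$3$ bridge edge spoil oddness at $v$, and $v$ may already see all three colors. I would resolve it using the freedom in Proposition~\ref{forest}. Take a spanning odd co-forest $H_2$ of $G_2$ (color $1$) and its complementary forest $F_2$ (colors $2,3$); then color $1$ has odd degree $d_{H_2}(v)$ at $v$. I prescribe the colors on $E_{F_2}(v)$ as follows: if $d_{F_2}(v)$ is even, color all these edges $3$; if $d_{F_2}(v)$ is odd, color one of them $2$ and the rest $3$. In both cases the prescription uses at most two colors, so Proposition~\ref{forest} extends it to a coloring of $F_2$ that is odd away from $v$, making $G_2$ odd at every vertex other than $v$. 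A short parity check confirms that, once the color-$3$ bridge is added, colors $1$ and $3$ have odd degree at $v$ and color $2$ is either odd or absent there, so $v$ is odd in $G$ as well. Combining the colorings of $G_1^+$ and of $G_2$ yields the desired odd $3$-edge-coloring of $G$.
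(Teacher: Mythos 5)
Your proof is correct and follows exactly the route the paper indicates (citing Pyber): odd $2$-edge-colorability of forests gives the even-order case via a spanning odd co-forest, and the $\kappa'(G)=1$ case is then bootstrapped across a bridge, with the prescription freedom in Proposition~\ref{forest} handling the parity coordination at $v$. Your parity check at $v$ (color $1$ odd from $H_2$, color $3$ odd after adding the bridge, color $2$ odd or absent) is sound in both cases $d_{F_2}(v)$ even and odd, so there is no gap.
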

\begin{proof}
 Let $n(G)$ be even and let $H$ be a spanning odd co-forest of $G$. Take an odd edge-coloring of the forest $\widehat{H}$ with color set $\{1,2\}$ and extend to $E(G)$ by coloring $E(H)$ with $3$. This gives an odd $3$-edge-coloring of $G$.

Assume now that $n(G)$ is odd.
	First we consider the case when the minimum degree $\delta(G)=1$. Select a pendant vertex $u$ and take a spanning odd co-forest $H$ of $G-u$. As $F=G-E(H)$ is a forest, combine an odd $2$-edge-coloring of $F$ with a monochromatic coloring of $E(H)$ that uses a third color.
	
	So suppose that there are no pendant vertices in $G$, but nevertheless $\kappa'(G)=1$. Let $vw$ be a bridge in $G$. Denote by $G_{v}$ and $G_{w}$, respectively,
	the components of $G-vw$ containing $v$ and $w$.
	By the previous case, the subgraphs $G'=G[V(G_{v})\cup\{w\}]$ and $G''=G[V(G_{w})\cup\{v\}]$ admit respective odd $3$-edge-colorings $\varphi'$ and $\varphi''$ with the same color set.
	Moreover, by permuting colors if necessary, we can achieve that $\varphi'(vw)=\varphi''(vw)$.
	Then $\varphi' \cup \varphi''$ is an odd $3$-edge-coloring of $G$.
\end{proof}

The next result may be used to characterize odd $2$-edge-colorability of unicyclic graphs.

\begin{proposition}
Let $G$ be a unicyclic loopless graph, and let $C\subseteq G$ be the (unique) cycle. Then $\chi'_o(G)\leq3$. Moreover, the upper bound is attained if and only if the following two conditions hold simultaneously:
\begin{itemize}
\item[$(i)$] $\{v\in V(C):d_G(v)=2\}$ is odd-sized;
\item[$(ii)$] $\{v\in V(C):d_G(v)\neq2$ and $d_G(v)$ is even$\}=\emptyset$.
\end{itemize}
\begin{proof}
Let us first show that $G$ is odd $3$-edge-colorable. Since $G$ is unicyclic, Proposition~\ref{forest} allows for the assumption that $G$ is connected. Moreover, in view of Proposition~\ref{even order}, we may further assume that $G$ is bridgeless. However, from all assumed it readily follows that $G=C$. Hence $\chi'_o(G)=\chi'(C)\leq3$.

Now we show that fulfilment of the conditions $(i)$ and $(ii)$ is both necessary and sufficient for the equality $\chi'_o(G)=3$ to hold.
If $G=C$ then condition $(ii)$ is clearly met, and the characterization is trivially true (as by then the notions `odd edge-coloring'  and `proper edge-coloring' become equivalent). Assuming $G\neq C$, let $S=\{v\in V(C):d_G(v)\neq2\}$ and $\widehat{S}=V(C)\backslash S$. Denote by $S'$ and $S''$, respectively, the subsets of $S$ comprised of those vertices $v$ for which the degree $d_G(v)$ is odd or even. Observe that for every $v\in S'$, a coloring of $E_C(v)$ extends to an odd $2$-edge-coloring of $E_G(v)$ if and only if it is monochromatic. Otherwise, for every $v\in S''$ each coloring of $E_C(v)$ extends to an odd $2$-edge-coloring of $E_G(v)$. Consequently, in view of Proposition~\ref{forest}, a given $2$-edge-coloring of $C$ extends to an odd $2$-edge-coloring of $G$ if and only if the coloring is dichromatic at each $v\in \widehat{S}$ and monochromatic at each $v\in S'$.

So, if condition $(ii)$ fails to hold, then $\chi'_o(G)\leq2$. Indeed, simply select a vertex $v\in S''$, take a $2$-edge-coloring of $C$ that is monochromatic at each vertex from $S'$ and dichromatic at each vertex from  $V(C)\backslash (S'\cup \{v\})$; by the above observation, such a coloring of $E(C)$ extends to an odd $2$-edge-coloring of $G$.

On the other hand, assuming $(ii)$, odd $2$-edge-colorability of $G$ is equivalent to the existence of a $2$-edge-coloring of $C$ that is dichromatic precisely at each vertex of $\widehat{S}$. The latter is clearly equivalent to the requirement that the set $\{v\in V(C):d_G(v)=2\}$ is even-sized.
\end{proof}
\end{proposition}
\begin{corollary}
    \label{unicycle}
Let $G$ be a connected unicyclic loopless graph, $C\subseteq G$ be the (unique) cycle and let $\{v\in V(C):d_G(v) \text{ is odd}\}=\emptyset$. Then $\chi'_o(G)\leq2$ unless $G=C$ is an odd cycle.
\end{corollary}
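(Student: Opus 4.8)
The plan is to deduce this directly from the preceding proposition, which already supplies $\chi'_o(G)\le 3$ together with an exact criterion (conditions $(i)$ and $(ii)$) for when the value $3$ is attained. Since the bound $\chi'_o(G)\le 3$ is free, it suffices to rule out $\chi'_o(G)=3$ whenever $G$ is not an odd cycle; equivalently, to show that at least one of $(i)$, $(ii)$ fails in that case. Thus the whole argument reduces to reading off what these two conditions say once the hypothesis — that no vertex of $C$ has odd degree — is imposed.

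First I would record the effect of the hypothesis: every $v\in V(C)$ has even degree, hence is either a $2$-vertex or a vertex of even degree at least $4$. Consequently the set appearing in condition $(ii)$, namely $\{v\in V(C):0\equiv_2 d_G(v)\neq 2\}$, coincides with the set of cycle vertices of degree $\ge 4$. The crucial point is that this set is empty precisely when every cycle vertex has degree exactly $2$, which (as $G$ is connected and unicyclic, so $G-E(C)$ is a forest hanging off $C$) happens if and only if $G=C$.

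This yields a clean dichotomy. If $G\neq C$, then some cycle vertex has degree $\ge 4$, so condition $(ii)$ fails; by the preceding proposition $\chi'_o(G)\neq 3$, and together with $\chi'_o(G)\le 3$ this gives $\chi'_o(G)\le 2$. If instead $G=C$, then condition $(ii)$ holds (the relevant set is empty) and the set in condition $(i)$, namely $\{v\in V(C):d_G(v)=2\}$, is all of $V(C)$, whose size equals the length of the cycle; hence $(i)$ holds if and only if $C$ is odd. So $\chi'_o(G)=3$ exactly when $C$ is an odd cycle, and $\chi'_o(G)\le 2$ otherwise. Combining the two cases gives the stated conclusion.

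I do not expect a genuine obstacle here, since the substance is entirely contained in the previous proposition. The only care needed is the bookkeeping that, under the hypothesis, the failure of condition $(ii)$ is equivalent to $G\neq C$, and that for $G=C$ condition $(i)$ reduces to the parity of the cycle length. One should simply ensure the two descriptive conditions are interpreted correctly and that the exceptional odd-cycle case is accounted for by the ``only if'' direction of the equality criterion.
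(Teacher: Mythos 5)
Your proposal is correct and matches the paper's intent exactly: the paper states this corollary without proof as an immediate consequence of the preceding proposition, and your argument is precisely the intended reading-off, namely that $G\neq C$ forces a cycle vertex of even degree $\geq 4$ (so condition $(ii)$ fails), while for $G=C$ condition $(i)$ reduces to the parity of the cycle length. Nothing is missing.
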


 We end the preliminaries with two more already known results (proofs can be found in~\cite{Pet18}).

\begin{proposition}
    \label{co-forest}
In a connected loopless graph $G$, let $v$ be an internal vertex and $e\in E_G(v)$. If $T\subseteq V(G)$ is even-sized, then there exists a $T$-join $H$ of $G$ which is a co-forest such that $E_{\widehat{H}}(v)\subseteq\{e\}$.
\end{proposition}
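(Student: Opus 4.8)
The plan is to recast everything in terms of the edge-complement $\hat{H}$ and then invoke the existence of forest $T$-joins from the preliminaries. First I would note that $H$ is a $T$-join co-forest of $G$ exactly when $F := \hat{H}$ is a \emph{forest}, and that since $d_F(w) = d_G(w) - d_H(w)$, the degree $d_F(w)$ is odd precisely for $w \in T' := D \oplus T$, where $D = \{w \in V(G) : d_G(w) \text{ is odd}\}$. Thus $F$ must be a $T'$-join, and $T'$ is even-sized because both $|D|$ (handshake lemma) and $|T|$ are even. Under this translation the target condition $E_{\hat{H}}(v) \subseteq \{e\}$ simply says that $F$ has at most the single edge $e$ at $v$. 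So the whole proposition reduces to producing a forest $T'$-join $F$ of $G$ with $E_F(v) \subseteq \{e\}$.

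For the construction I would use that $v$, being internal, is not a cut-vertex, so $G - v$ is connected, and then split into two cases by the parity forced at $v$. If $v \notin T'$, then $T'$ is an even-sized subset of $V(G - v)$, so $G - v$ has a forest $T'$-join $F$ (fact $(1)$ of the preliminaries); read inside $G$ it is acyclic, uses no edge at $v$, and hence satisfies $E_F(v) = \emptyset \subseteq \{e\}$. If $v \in T'$, I would write $e = vu$ (with $u \neq v$ since $G$ is loopless) and set $T'' := (T' \setminus \{v\}) \oplus \{u\}$; as $T' \setminus \{v\}$ is odd-sized, toggling $u$ makes $T''$ even-sized, and $T'' \subseteq V(G - v)$. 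Taking a forest $T''$-join $K$ of $G - v$ and putting $F := K + e$ attaches $v$ as a leaf, so $F$ stays a forest with $E_F(v) = \{e\}$; adding $e$ raises the degree of $u$ by one, which repairs its parity, while all remaining parities are untouched, so $F$ is the desired forest $T'$-join.

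In either case $H := G - E(F)$ is a $T$-join of $G$ that is a co-forest and has $E_{\hat{H}}(v) = E_F(v) \subseteq \{e\}$, completing the argument. I expect the only delicate step to be the parity bookkeeping in the second case: one must choose the modified parity set $T''$ so that it is even-sized (otherwise $G - v$ need not admit a forest $T''$-join) and then check that reattaching the single edge $e$ simultaneously fixes the parities at both endpoints $v$ and $u$. Everything else is a direct appeal to the already-established existence of forest $T$-joins in connected graphs, together with the fact that deleting an internal vertex preserves connectivity.
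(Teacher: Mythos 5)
Your proof is correct. Note that the paper does not actually prove Proposition~\ref{co-forest} in the text --- it is stated among the preliminaries with a pointer to~\cite{Pet18} --- so there is no in-paper argument to compare against; your complementation argument is self-contained and is the natural route. The translation $H \mapsto F = \hat{H}$, with $F$ required to be a forest $T'$-join of $G$ for $T' = D \oplus T$ (where $D$ is the set of odd-degree vertices of $G$, so $|T'|$ is even by the handshake lemma), is exactly right, and the parity bookkeeping in your second case checks out: with $v \in T'$ and $e=vu$, the set $T'' = (T'\setminus\{v\})\oplus\{u\}$ is even-sized, attaching $v$ as a leaf via $e$ keeps $F=K+e$ acyclic, and toggling $u$ in $T''$ is precisely compensated by the degree increment at $u$. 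Your use of the internal-vertex hypothesis (namely that $G-v$ is connected, so fact $(1)$ of the preliminaries applies to $G-v$) is also the right one. One small streamlining worth knowing: the two cases can be merged by working in the connected graph $G^{*} = G - \bigl(E_G(v)\setminus\{e\}\bigr)$ (connected because $G-v$ is connected and $v$ is reattached by $e$); a forest $T'$-join $F$ of $G^{*}$ automatically satisfies $E_F(v)\subseteq\{e\}$ since $e$ is the only edge of $G^{*}$ at $v$, and then $H := G - E(F)$ is the desired co-forest $T$-join of $G$ in one stroke, with no case split on whether $v \in T'$.
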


If additionally the graph $G$ from Proposition~\ref{co-forest} is of even order, we derive the following by setting $T=V(G)$.
\begin{corollary}
    \label{odd co-forest}
   In a connected loopless graph $G$ of even order, let $v$ be an internal vertex and $e\in E_G(v)$. Then there exists a spanning odd co-forest $H$ of $G$ such that $E_{\widehat{H}}(v)\subseteq\{e\}$.

\end{corollary}


\section{Subdivisions of odd graphs}

Recall that $\mathcal{S}$ denotes the class of all loopless subdivisions of odd graphs. The following proposition is an overture to our subsequent study of $\mathcal{S}$ in terms of the value of the odd chromatic index. The final product of the study, our main result, shall be formulated by the end of Section~4. As a warm-up, we commence by showing that four colors always suffice for an odd edge-coloring of any member of $\mathcal{S}$. Moreover, the fourth color can be reduced to at most one appearance per component.
\begin{proposition}
    \label{4-edge-colorability}
Let $v$ be a $2$-vertex of a connected graph $G\in \mathcal{S}$, and let $e\in E_G(v)$. Then $G$ admits an odd edge-coloring with color set $\{1,2,3,4\}$ such that the color class $E_4\subseteq \{e\}$. Moreover, if $\chi'_o(G)=4$ then it holds that:
 \begin{itemize}
 \item [$(i)$] Every $2$-vertex is internal;
 \item [$(ii)$] No $2$-vertices are adjacent.
 \end{itemize}
\end{proposition}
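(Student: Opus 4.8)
The plan is to reduce every case to Proposition~\ref{even order}, exploiting the parity of $n(G)$ together with the edge-connectivity. Write $e=va$ and let $f$ be the second edge at the $2$-vertex $v$. I distinguish three cases. If $\kappa'(G)=1$, or if $n(G)$ is even, then Proposition~\ref{even order} already yields $\chi_o'(G)\leq 3$, so an odd $3$-edge-coloring with color set $\{1,2,3\}$ does the job with $E_4=\emptyset\subseteq\{e\}$. The only remaining case is $n(G)$ odd together with $\kappa'(G)\geq 2$ (i.e. $G$ bridgeless), and this is where the prescribed edge $e$ and the fourth color come into play.

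In that remaining case, since $G$ is bridgeless the edge $e$ is not a bridge, so $G-e$ stays connected; moreover $v$ has become a pendant vertex of $G-e$, whence its surviving edge $f$ is a pendant edge and $\kappa'(G-e)=1$. Proposition~\ref{even order} then gives $\chi_o'(G-e)\leq 3$. I fix such a coloring with colors $\{1,2,3\}$ and reinstate $e$ with the new color $4$. It remains to verify oddness of the resulting coloring at the two endpoints of $e$: at $v$ the edges $f$ and $e$ now carry two distinct colors, each appearing once, while at $a$ the colors $1,2,3$ retain the (odd) multiplicities they had in the coloring of $G-e$ and color $4$ appears exactly once. Hence the coloring is odd everywhere and $E_4=\{e\}$.

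A word on why the more obvious route fails: suppressing $v$ produces a graph of even order that is odd $3$-edge-colorable, but un-suppressing forces the two edges at $v$ to receive a common color (in order to preserve parity at both neighbors of $v$), which is then even at $v$; repairing this with color $4$ only transfers the defect to a neighbor. Creating a pendant vertex by deleting $e$ sidesteps this parity clash entirely, and I expect this observation to be the one genuinely delicate point of the existence part, the rest being a clean case split.

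For the ``moreover'' part, note first that $\chi_o'(G)=4$ forces $G$ to be bridgeless and of odd order, since otherwise the first two cases above give $\chi_o'(G)\leq 3$. For (i), if some $2$-vertex $w$ were a cut-vertex, then its two neighbors lie in different components of $G-w$, so every path between them runs through $w$ and hence uses both edges at $w$; consequently each of these two edges is a bridge, contradicting bridgelessness. Thus no $2$-vertex is a cut-vertex, i.e. every $2$-vertex is internal. For (ii), suppose $u$ and $w$ are adjacent $2$-vertices. As $G$ is bridgeless, $uw$ is not a bridge, so $G-uw$ is connected and both $u$ and $w$ become pendant, giving $\kappa'(G-uw)=1$ and hence an odd $3$-edge-coloring of $G-uw$ by Proposition~\ref{even order}. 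Reinserting $uw$ and coloring it with a color from $\{1,2,3\}$ distinct from the colors of the remaining edges at $u$ and at $w$ (possible, since at most two values are forbidden) keeps the coloring odd and uses only three colors, contradicting $\chi_o'(G)=4$. Therefore no two $2$-vertices are adjacent.
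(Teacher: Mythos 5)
Your proposal is correct and follows essentially the same route as the paper: both reduce everything to Proposition~\ref{even order}, handle the main case by deleting $e$ so that $v$ becomes pendant (giving $\kappa'(G-e)=1$, hence an odd $3$-edge-coloring of $G-e$) and then coloring $e$ with the fourth color, and prove $(ii)$ by the identical deletion-and-reinsertion of the edge joining the two $2$-vertices. The only cosmetic difference is your case split (``$n(G)$ even or $\kappa'(G)=1$'' versus bridgeless) in place of the paper's split on whether some $2$-vertex is a cut-vertex, which amounts to the same use of Proposition~\ref{even order}.
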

\begin{proof}
If there exists a cutvertex $u$ in $G$ such that $d_G(u)=2$, then $u$ must be incident with two bridges. Consequently, Proposition~\ref{even order} yields odd $3$-edge-colorability of $G$. Assuming $(i)$, the vertex $v$ is internal. Therefore, the graph $G-e$ is connected and of minimum degree $\delta(G-e)=1$, so it admits an odd edge-coloring with color set $\{1,2,3\}$. By assigning the color $4$ to the edge $e$ we obtain the promised coloring of $E(G)$. This proves the first part and, in addition, confirms that $(i)$ is necessary for $\chi'_o(G)=4$.

As for $(ii)$, still assuming $\chi'_o(G)=4$, suppose there is an edge $f$ whose endvertices $u$ and $w$ are $2$-vertices. Let $g_u$ and $g_w$ be the other edges (besides $f$) incident with $u$ and $w$, respectively. Since $g_u$ and $g_w$ are pendant edges in the (connected) graph $G-f$, Proposition~\ref{even order} guarantees that there is an odd edge-coloring $\varphi$ of $G-f$ with color set $\{1,2,3\}$. Extend $\varphi$ to $E(G)$ by assigning $f$ with a color from $\{1,2,3\}\backslash\{\varphi(g_u),\varphi(g_w)\}$. This completes an odd $3$-edge-coloring of $G$, a contradiction.\end{proof}

Note that the first part of Proposition~\ref{4-edge-colorability} supports Conjecture~\ref{conj:finiteness}. For our intended characterization of all members of the class $\mathcal{S}$ in terms of their odd chromatic index, let us denote by $\mathcal{S}_i$ $(i=1,2,3,4)$ the subclass consisting of those $G\in\mathcal{S}$ having $\chi'_o(G)=i$. Clearly, $\mathcal{S}_1$ comprises the class of loopless odd graphs, $\mathcal{O}$. At the other end of the spectrum, the second part of Proposition~\ref{4-edge-colorability} gives a pair of necessary conditions for membership in $\mathcal{S}_4$;  equivalently, it describes two sufficient conditions for odd $3$-edge-colorability of a loopless subdivision of an odd graph. The following result provides another such condition (which shall be useful on more than one occasion later on in this section).

\begin{proposition}
    \label{sufficient condition 3-edge-colorability}
Let $G\in \mathcal{S}$ be a connected graph, and let $C\subseteq G$ be a cycle passing through a $2$-vertex $v$ of $G$. If the cycle $C$ is even or it passes through another $2$-vertex of $G$, then $\chi'_o(G)\leq 3$.
\end{proposition}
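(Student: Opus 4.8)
The plan is to split on the parity of $n(G)$. If $n(G)$ is even then Proposition~\ref{even order} already gives $\chi'_o(G)\leq 3$, so assume $n(G)$ is odd; since every vertex of $G$ is either odd or a $2$-vertex and the number of odd vertices is even, this forces $G$ to have an odd number of $2$-vertices. Note also that, being a $2$-vertex on a cycle, $v$ is incident to no bridge and hence is not a cut-vertex, so $v$ is internal. Write $a,b$ for the two neighbors of $v$ (so $E_G(v)=\{va,vb\}$) and let $Q=C-v$ be the $a$-$b$ path formed by the remaining edges of $C$.

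I would reduce to the even-order case by suppressing $v$. Let $G'=G\%v$; it is again a connected subdivision of an odd graph, now of even order, and the new edge $f=ab$ lies on the cycle $Q+f$ of $G'$, so $f$ is not a bridge. By Proposition~\ref{even order}, $\chi'_o(G')\leq 3$. The difficulty is to \emph{lift} a coloring back across the reinserted path $a$-$v$-$b$: if one keeps the coloring of $G'$ fixed, restoring oddness at $a$ and at $b$ forces both $va$ and $vb$ to carry the color of $f$, which is impossible at the $2$-vertex $v$ (its two edges must differ). I would therefore aim instead to build an odd co-forest of $G$ directly: a subgraph $H$ with $H$ odd and $\hat H=G-E(H)$ a forest, after which coloring $E(H)$ with $1$ and $2$-coloring $\hat H$ with $\{2,3\}$ via Proposition~\ref{forest} finishes. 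Concretely, if $G'$ has a spanning odd co-forest avoiding $f$ — equivalently, using the $T$-join facts recorded in the preliminaries, a forest $T$-join $F^\ast$ of $G'$ \emph{containing} $f$, where $T$ is the (even) set of $2$-vertices of $G'$ — then $H:=G'-E(F^\ast)$ does not meet $v$, and re-subdividing $f$ turns $\hat H$ into the forest $F^\ast$ with $f$ replaced by $a$-$v$-$b$; this $H$ is the sought odd co-forest of $G$.

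The heart of the matter is the existence of such an $F^\ast$, and this is exactly where the hypotheses enter. Writing $T'=T\oplus\{a,b\}$, a forest $T$-join through $f$ amounts to a forest $T'$-join of $G'-f$ in which $a$ and $b$ lie in \emph{different} components (so that adding $f$ keeps it acyclic). When $C$ carries a second $2$-vertex $w$, after suppression $G'$ still has at least two $2$-vertices on $Q$, so $|T|\geq 2$; these furnish spare odd-degree endpoints, and I would route one tree from $a$ to the first $2$-vertex of $Q$ and another from $b$ to a second, realizing $F^\ast$ with $a,b$ separated. By contrast, when $v$ is the \emph{only} $2$-vertex we have $T=\emptyset$, every forest $T'$-join of $G'-f$ is an $a$-$b$ path, and no such $F^\ast$ exists — precisely the obstruction exhibited by the graphs that require four colors. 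In that remaining case the hypothesis must supply an \emph{even} cycle $C$, which I would treat directly: properly $2$-color $E(C)$ with $\{1,2\}$ (possible since $C$ is even), so that every vertex of $C$ — in particular $v$ — meets each of $1,2$ exactly once along $C$, and then extend to $G-E(C)$ using color $3$ to absorb the residual parity at the attachment vertices of $C$, the evenness guaranteeing that the parities close up.

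The step I expect to be the main obstacle is this final parity/linkage bookkeeping: verifying that the forest $T$-join through $f$ can be chosen so as to \emph{separate} $a$ and $b$ (a linkage-type claim that genuinely uses $|T|\geq 2$), and, dually, that the even-cycle extension is always realizable. A secondary nuisance is the family of degenerate configurations — $a=b$ (so $C$ is a digon at $v$ and $f$ a loop), a $2$-vertex adjacent to $v$ (so $a$ or $b$ already lies in $T$, which in fact only helps), and the possibility that suppression creates a cut-vertex — each of which I would dispose of separately, typically through an edge-connectivity or even-order shortcut via Proposition~\ref{even order}.
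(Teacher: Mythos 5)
Your reduction is set up correctly (suppress $v$, seek an odd factor $H$ of $G-v$ whose edge-complement behaves well after the path $a$-$v$-$b$ is restored), but the proof has a genuine gap exactly where you flag it: the existence of a forest $T$-join of $G'$ through $f$ --- equivalently, an odd factor $H$ of $G-v$ with $a$ and $b$ in \emph{different} components of the complement --- is asserted (``route one tree from $a$ to the first $2$-vertex of $Q$ and another from $b$ to a second'') but never proved, and this claim is the entire content of the proposition. Note also that you are demanding strictly more than is needed or than the paper establishes: the paper only needs $\hat{H}+\{uv,vw\}$ to avoid containing an \emph{odd} cycle as a component, and it happily tolerates the outcome where a component of $\hat{H}$ is an \emph{even} $u$-$w$ path (then $\hat{H}+\{uv,vw\}$ is unicyclic with an even cycle on which no vertex has odd degree, and Corollary~\ref{unicycle} applies). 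Ruling out the odd-$u$-$w$-path obstruction is achieved not by a routing argument but by an extremal one: among odd co-forests $H$ of $G-v$, choose one maximizing the union $Q_u\cup Q_w$ of the components of $u$ and $w$ in $P\cap Q$; the hypothesis ($C$ even, or a second $2$-vertex on $C$, which is necessarily a pendant vertex of $\hat{H}$) forces $P\neq Q$, and the symmetric difference $H\oplus P\oplus Q$, re-extended to a maximal odd factor $H'$, strictly enlarges $Q_u\cup Q_w$, a contradiction. Nothing in your sketch substitutes for this; a single arbitrary $H$ does not suffice, and your stronger ``forest'' target may well be unattainable in general.

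Two further concrete defects in the case analysis. First, within your case (a) you claim $G'$ has ``at least two $2$-vertices on $Q$'': only the one hypothesized $2$-vertex $w$ is guaranteed to lie on $Q$; parity gives $|T|\geq2$, but the second element of $T$ can sit anywhere in $G'$, so the raw material for your two routed trees need not exist where you need it. Second, your trichotomy omits the case where $C$ is even but $G$ has $2$-vertices other than $v$, none of them on $C$: case (a) presupposes a second $2$-vertex on $Q$, case (b) presupposes $v$ is unique, and your fallback coloring (properly $2$-color $E(C)$, color $E(G)\setminus E(C)$ by $3$) fails precisely there, since any $2$-vertex off $C$ receives color $3$ on both of its edges and thus has even degree in that class. (That fallback is fine when $v$ is the only $2$-vertex, which is the one situation where you invoke it, but the middle case is simply not covered.) The opening reductions --- $n(G)$ odd via Proposition~\ref{even order}, neighbors of $v$ of odd degree via Proposition~\ref{4-edge-colorability}$(ii)$, the singleton-neighborhood digon case --- match the paper and are sound.
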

\begin{proof}
We argue by contradiction, that is, suppose $G$ is not odd $3$-edge-colorable. Then, by Proposition~\ref{even order}, the order $n(G)$ is odd. So, in view of Proposition~\ref{4-edge-colorability}, we have that the graph $G-v$ is connected and of even order. Let $\mathcal{H}$ be the collection of all spanning odd co-forests of $G-v$. Thus $\mathcal{H}\neq\emptyset$. As $d_G(v)=2$, the neighborhood $N_G(v)$ is either a $1$-set or a $2$-set. We show it is the latter.

\bigskip

\noindent \textbf{Claim 1.} $|N_G(v)|=2$.

\medskip

\noindent Otherwise, the cycle $C$ is of length $2$ (namely, $E(C)=E_G(v)$). Consider a member $H\in\mathcal{H}$, along with its edge-complement in regard to $G$: the former subgraph (the odd co-forest $H$) is odd $1$-edge-colorable, whereas the latter is a unicyclic graph (the unique cycle is $C$) and its component containing the cycle satisfies all assumptions of Corollary~\ref{unicycle}. Hence, $G$ admits an odd edge-coloring that uses at most $1+2=3$ colors, a contradiction.\hfill$\diamond$

\bigskip

Let $N_G(v)=\{u,w\}$. By Proposition~\ref{4-edge-colorability}~$(ii)$, the degrees $d_G(u),d_G(w)$ are odd. Let $P=C-v$ and observe that (by the initial assumptions) $P$ is a $u$-$w$ path in $G-v$ which is even or it passes through a $2$-vertex of $G$. For any $H\in\mathcal{H}$ denote $\widehat{H}=G-v-E(H)$. Since $H$ is a (spanning odd) co-forest of $G-v$, the graph $\widehat{H}$ is a forest. We show next that there is a particular component in $\widehat{H}$.

\bigskip

\noindent \textbf{Claim 2.} \textit{For every $H\in\mathcal{H}$, a component of $\widehat{H}$ is an odd $u$-$w$ path $Q=Q(\widehat{H})$. Moreover, $Q\neq P$.}

\medskip

\noindent  Note that the vertices of odd degree in $\widehat{H}$ are precisely $u,w$ and all $2$-vertices of $G$ that are within $G-v$. Thus, looking at $G-E(H)=\widehat{H}+\{uv,vw\}$, the odd vertices of this graph are precisely the $2$-vertices of $G$ that are $\neq v$; moreover, each such vertex is pendant in regard to $G-E(H)$. There are two possibilities for $G-E(H)$: either it is a forest (if $u,w$ do not share a component of $\widehat{H}$), or it is a unicyclic graph such that no vertex of the cycle has an odd degree. Therefore, in view of Corollary~\ref{unicycle}, the graph $G-E(H)$ is odd $2$-edge-colorable (and consequently $G$ is odd $3$-edge-colorable), unless it is always the case that a component of $\widehat{H}$ is an odd $u$-$w$ path, say $Q$. As we are supposing $\chi'_o(G)>3$, we have thus established the existence of the path-component $Q$.
Let us show that $Q\neq P$. If the cycle $C$ is even, then $P$ and $Q$ are of different parities, and hence cannot be the same. And if $C$ passes through a $2$-vertex of $G$ contained within $G-v$, then so does $P$ but not $Q$ (because every such $2$-vertex of $G$ is a pendant vertex of $\widehat{H}$). \hfill$\diamond$

\bigskip

Let $Q_u$ and $Q_w$ be, respectively, the components of $u$ and $w$ in $P\cap Q$. (Here $P$ and $Q$ are seen as spanning subgraphs of $G-v$ with respective edge sets $E(P)$ and $E(Q)$.) Since $Q\neq P$, the paths $Q_u$ and $Q_w$ are disjoint. The former has $u$, whereas the latter has $w$ as an endvertex. Notice that $Q_u\cup Q_w\subset P$. Say $e_u$ and $e_w$ are, respectively, the first and the last edge lying outside $Q_u\cup Q_w$ on a traversal of $P$ from $u$ to $w$ (it is not excluded that $e_u=e_w$).

 \begin{figure}[ht!]
	$$
		\includegraphics[scale=0.8]{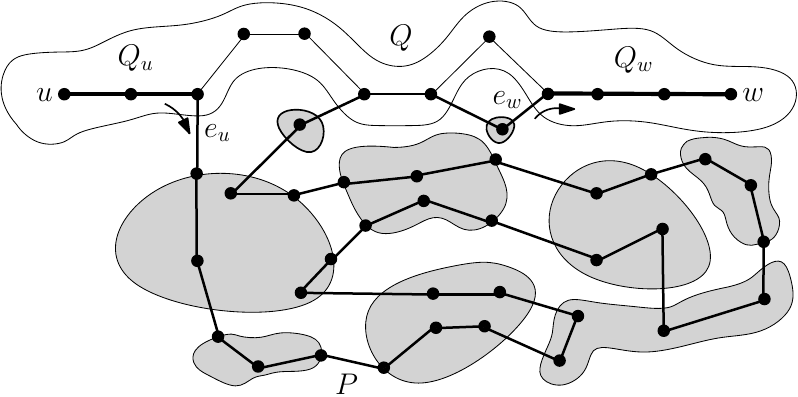}
	$$
	\caption{The path $P$ and the components of the forest $\widehat{H}$. Apart from the path-component $Q$, the rest of the components of $\widehat{H}$ are shaded. The edges of $P$ are depicted as heavier, and those of $Q_u\cup Q_w$ are fat. On a traversal of $P$ from $u$ to $w$, the arrows notify the first embarkment and the last disembarkment of $P-E(Q)$. Since this happens precisely along the edges $e_u$ and $e_w$, respectively, these two cannot be cycle edges of $\widehat{H}\oplus P\oplus Q$. The same is obviously true for any edge of $Q_u\cup Q_w$.}
	\label{fig:PQH}
\end{figure}

Take the symmetric difference $H\oplus P\oplus Q$. The obtained graph is clearly another odd factor of $G-v$, though not necessarily a co-forest. Let $H'$ be a maximal odd factor of $G-v$ subjected to the condition $H\oplus P\oplus Q\subseteq H'$. Then obviously $H'\in\mathcal{H}$. According to Claim~2, an odd $u$-$w$ path $Q'$ constitutes a component of the forest $\widehat{H'}=G-v-E(H')$. Let $Q'_u,Q'_w$ be defined analogously as before, that is, let $Q'_u$ and $Q'_w$ be the respective components of $u$ and $w$ in $P\cap Q'$.

\bigskip

\noindent \textbf{Claim 3.} \textit{$Q_u\subset Q'_u$ and $Q_w\subset Q'_w$.}

\medskip

\noindent Begin by observing that $H'-E(H\oplus P\oplus Q)$ is an even subgraph of $\widehat{H}\oplus P\oplus Q$, the edge-complement of $H\oplus P\oplus Q$ with respect to $G-v$. Also note that $Q_u\cup\{e_u\}\cup \{e_w\}\cup Q_w$ is  fully contained in $\widehat{H}\oplus P\oplus Q$. Moreover, for any vertex $x\in V(Q_u\cup Q_w)$ it holds that $d_{\widehat{H}\oplus P\oplus Q}(x)=d_P(x)\leq2$. In particular, $d_{\widehat{H}\oplus P\oplus Q}(u)=d_{\widehat{H}\oplus P\oplus Q}(w)=1$. Therefore, no edge from $Q_u\cup\{e_u\}\cup \{e_w\}\cup Q_w$ belongs to a cycle contained entirely in $\widehat{H}\oplus P\oplus Q$. Consequently, $H'$ is edge-disjoint from $Q_u\cup\{e_u\}\cup \{e_w\}\cup Q_w$. Equivalently, $Q_u\cup\{e_u\}\cup \{e_w\}\cup Q_w\subseteq P\cap Q'$. It follows that $Q_u\cup\{e_u\}\subseteq Q'_u$ and $Q_w\cup\{e_w\}\subseteq Q'_w$. \hfill$\diamond$

\bigskip

So for any $H\in\mathcal{H}$, there exists another $H'\in\mathcal{H}$ such that $Q_u\cup Q_w\subset Q'_u\cup Q'_w \subset P$.
This is the desired contradiction.
\end{proof}

Our next result concerns odd $2$-edge-colorability of subdivisions of odd graphs, and thus yields a structural characterization of $\mathcal{S}_2$.

\begin{proposition} The following statements are equivalent for every graph $G\in \mathcal{S}$:
\begin{itemize}
\item[$(i)$] $\chi'_o(G)\leq2$;
\item[$(ii)$] For every cycle $C$ of $G$ the set $\{v:v\in V(C) \text{ and } d_G(v)=2\}$ is even-sized.
\end{itemize}
\end{proposition}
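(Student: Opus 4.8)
The plan is to prove both implications separately. Throughout, recall that $G\in\mathcal{S}$ is a subdivision of an odd graph, so every vertex of $G$ is either a $2$-vertex (arising from a subdivision) or has odd degree (a vertex of the original odd graph). This parity structure is the key feature I will exploit.

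For the forward direction $(i)\Rightarrow(ii)$, I would argue by contrapositive. Suppose some cycle $C$ contains an odd number of $2$-vertices. The idea is that an odd $2$-edge-coloring of $G$ restricts to a proper-in-the-odd-sense coloring along $C$: at each $2$-vertex of $C$, being incident to exactly two edges, oddness forces the two edges of $C$ at that vertex to receive \emph{distinct} colors (since a single color used at a $2$-vertex must appear an odd number of times, i.e. exactly once, on each of its edges separately — so both colors present means the two edges differ); whereas at each vertex $v\in V(C)$ with $d_G(v)$ odd and $\neq 2$, I need to check that oddness imposes no constraint, or rather that a color-change is \emph{not} forced. Traversing $C$ and counting color-switches, the number of switches around a cycle must be even; but an odd number of $2$-vertices (each forcing a switch) together with the even-degree vertices off the cycle forcing no switch would yield an odd total, a contradiction. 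I would make this precise by reducing to the unicyclic sub-structure and invoking the parity argument already used in the unicyclic proposition and Corollary~\ref{unicycle}. The main subtlety here is handling the odd-degree vertices of $C$ with degree $>2$: I must confirm that the coloring can switch \emph{or} not switch freely at such vertices so that only the $2$-vertices contribute a forced parity, which follows from Proposition~\ref{forest} applied to the off-cycle forest structure.

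For the converse $(ii)\Rightarrow(i)$, assume every cycle of $G$ carries an even number of $2$-vertices; I want to construct an odd $2$-edge-coloring. The natural strategy is to let $T$ be the set of all $2$-vertices of $G$ and seek a suitable $T$-join. Since $G\in\mathcal{S}$ has even order whenever $|T|$ is even on the relevant components, I would first verify the handshake-type parity condition so that a $T$-join exists (using the $T$-join existence results $(1)$–$(3)$ collected in the preliminaries). By result $(1)$ there is a $T$-join $H$ that is a forest. Then $\hat H = G - E(H)$ has odd degree exactly at the vertices of $T$ together with the odd-degree vertices of $G$ — in fact, because the original graph is odd, $\hat H$ should turn out to be an odd subgraph (every vertex has odd degree in $\hat H$), giving color class $1$, while the forest $H$ is odd $2$-colorable but I actually want $H$ to be coloured with the \emph{single} remaining color. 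I expect the clean statement to be: choose the $T$-join so that $\hat H$ is odd (color class $1$) and $H$ is odd (color class $2$), yielding an odd $2$-edge-coloring directly. The hypothesis $(ii)$ is exactly what guarantees the forest $T$-join can be chosen with $\hat{H}$ odd and avoids the obstruction (an odd cycle through a lone $2$-vertex) that blocked odd $2$-colorability in Corollary~\ref{unicycle}.

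The step I expect to be the main obstacle is the converse construction: merely having a $T$-join is not enough, since I need \emph{both} $H$ and $\hat H$ to induce odd subgraphs simultaneously, which is a stronger requirement than a generic $T$-join provides. I anticipate that condition $(ii)$ must be translated into a statement about every cycle being "balanced" with respect to the intended $2$-coloring, and that the correct vehicle is to reduce, via the suppress operation on $2$-vertices or via Proposition~\ref{forest}, to a setting where the parity condition on each cycle is literally the solvability condition for a $2$-coloring that is dichromatic precisely at the $2$-vertices. Getting the bookkeeping of parities on overlapping cycles to cohere globally (rather than cycle-by-cycle) is where the real work lies, and I would handle it by arguing on the cycle space of $G$: condition $(ii)$ says the "$2$-vertex count" functional vanishes mod $2$ on every cycle, hence on the whole cycle space, which is precisely the obstruction-freeness needed to extend a local prescription to a global odd $2$-edge-coloring.
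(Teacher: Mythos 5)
Your overall skeleton is the right one, but both halves have concrete problems. In the forward direction, the fact you need at the odd-degree vertices of $C$ is the \emph{opposite} of what you assert: in an odd $2$-edge-coloring, every vertex $v$ of odd degree is forced to be \emph{monochromatic}, because the two color-degrees at $v$ sum to the odd number $d_G(v)$, so one of them is even, and an appearing color must have odd degree — hence the even one does not appear at all. Consequently color-switches along a cycle occur \emph{exactly} at its $2$-vertices, and the even total number of switches around a closed traversal yields $(ii)$. Your version ("oddness imposes no constraint\ldots a color-change is not forced", switching "freely" at such vertices) destroys the count: if switches at odd vertices were merely optional rather than forbidden, an odd number of forced switches at the $2$-vertices could be compensated elsewhere and no contradiction would follow. (The appeal to Proposition~\ref{forest} and to "even-degree vertices off the cycle" is also misplaced: in $G\in\mathcal{S}$ the only even-degree vertices are the $2$-vertices.) The repair is one line, but as written the parity argument does not close.

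In the converse, the $T$-join vehicle fails outright, not just "needs strengthening". First, with $T$ the set of $2$-vertices, $T$ can be odd-sized in a connected graph satisfying $(ii)$: take the odd graph on two vertices joined by three parallel edges and subdivide each edge once; every cycle contains exactly two $2$-vertices, yet $|T|=3$, so no $T$-join exists — while this graph \emph{is} odd $2$-edge-colorable (color the three edges at one branch vertex $1$ and the three at the other $2$). Second, the color classes of an odd $2$-edge-coloring are not $T$-joins anyway: a class must have degree odd \emph{or zero} at every vertex, and demanding that both $H$ and $\hat H$ be odd forces $d_H(v)=0$ at every branch vertex, i.e.\ $E(H)$ confined to edges joining two $2$-vertices — again impossible in the same example. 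Your cycle-space fallback is the correct idea and is, in substance, the paper's route: the paper interpolates a third statement — call an edge of the underlying odd graph $G_0$ \emph{odd} if it carries an odd number of $2$-vertices, contract the components of $G_0$ minus the odd edges, and show the contracted graph is bipartite — proving $(i)\Rightarrow(ii)\Rightarrow(iii)\Rightarrow(i)$, where the final implication builds the coloring from the bipartition by coloring each subdivision path properly (alternately), so branch vertices end up monochromatic and $2$-vertices dichromatic. But your proposal stops at asserting that vanishing of the $2$-vertex functional on the cycle space "is precisely the obstruction-freeness needed": that extendability claim is itself the heart of the matter and requires an argument (e.g.\ a spanning-tree potential on $G_0$, checked on fundamental cycles, followed by the explicit alternating coloring of the paths $P_e$). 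As it stands, the converse is a plausible plan, not a proof, and its first proposed mechanism is demonstrably wrong.
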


\begin{proof}
We may assume that $G$ is connected. Notice that a $2$-edge-coloring of $G$ is odd if and only if every edge set $E_G(v)$ is monochromatic or dichromatic depending on whether $v$ is an odd vertex or a $2$-vertex of $G$.

Now $(i)\Rightarrow (ii)$ follows easily as moving around any given cycle $C\subseteq G$, there must occur an even number of color changes; in other words, $C$ must contain an even number (possibly $0$) of $2$-vertices of $G$.

To show $(ii)\Rightarrow (i)$, select a spanning tree $T$ rooted at an odd vertex $v_0$. First we color $E(T)$ as follows. Assign $E_T(v_0)$ with the color $1$, and repeatedly apply the following procedure until $E(T)$ becomes fully colored: choose a vertex $v\neq v_0$ that has just one incident edge already colored, say by a color $c\in\{1,2\}$; color the rest of $E_T(v)$ by the color $c$ (resp. $3-c$) if $d_G(v)$ is odd (resp. equal to $2$). This gives a $2$-edge-coloring $\varphi$ of $T$ that is dichromatic precisely at the $2$-vertices of $G$ which are not pendant in regard to $T$.

Let us extend $\varphi$ to $E(G)$. Consider an edge $e\in E(G)\backslash E(T)$, say $x$ and $y$ are its endvertices. Denote by $e_x$ and $e_y$ the (not necessarily distinct) edges of the $x$-$y$ path $P$ in $T$ that are incident with $x$ and $y$, respectively. Note that the equality $\varphi(e_x)=\varphi(e_y)$ holds if and only if an even number (possibly $0$) of internal vertices of $P$ are $2$-vertices in $G$. Therefore, since $P+e$ is a cycle, $\varphi(e_x)=\varphi(e_y)$ if and only if an even number (both or neither) of the vertices $x,y$ are $2$-vertices in $G$. So, we assign one of the colors $1,2$ to $e$ as follows: (1) if both $x,y$ are $2$-vertices in $G$, then set $\varphi(e)\neq \varphi(e_x)$; (2) if neither $x,y$ are $2$-vertices in $G$, then set $\varphi(e)=\varphi(e_x)$; if just one of the vertices $x,y$ is a $2$-vertex in $G$, say such is $x$, then set $\varphi(e)=\varphi(e_y)$. The resulting $\varphi$ is an odd $2$-edge-coloring of $G$ since on every edge set $E_G(v)$ it is monochromatic or dichromatic depending on whether $v$ is an odd vertex or a $2$-vertex.
\end{proof}

The above proof shows that the given characterization of odd $2$-edge-colorability within $\mathcal{S}$ is good and, in the affirmative, such a coloring can be found in polynomial time.

\begin{corollary}
    \label{odd 2-edge-colorability}
Let $G\in\mathcal{S}$. Then $\chi'_o(G)=2$ if and only if $G\notin\mathcal{O}$ and for every cycle $C$ of $G$ the set $\{v:v\in V(C) \text{ and } d_G(v)=2\}$ is even-sized.
\end{corollary}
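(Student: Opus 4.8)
The plan is to derive Corollary~\ref{odd 2-edge-colorability} directly from the Proposition immediately preceding it, which already establishes the equivalence of $\chi'_o(G)\leq 2$ with the parity condition on cycles. The only additional ingredient needed is to separate the case $\chi'_o(G)=2$ from the case $\chi'_o(G)\in\{0,1\}$, the latter occurring precisely when $G$ is empty or odd. Since $G\in\mathcal{S}$ is by hypothesis a subdivision of an odd graph, the degenerate case $\chi'_o(G)=0$ (i.e.\ $G$ empty) can be folded into the membership condition $G\in\mathcal{O}$, because an empty graph is vacuously odd.

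First I would invoke the Proposition to record that $\chi'_o(G)\leq 2$ holds if and only if every cycle $C$ of $G$ satisfies the stated parity condition, namely that $|\{v\in V(C):d_G(v)=2\}|$ is even. This handles the upper bound on the index. Next I would observe that $\chi'_o(G)=2$ is equivalent to the conjunction of $\chi'_o(G)\leq 2$ and $\chi'_o(G)\neq 1$ (and $\neq 0$). By the very definition of the odd chromatic index, $\chi'_o(G)\leq 1$ means $G$ is odd edge-colorable using a single color, which happens exactly when the whole edge set induces an odd subgraph, i.e.\ when $G$ is itself odd; and $\chi'_o(G)=0$ means $G$ is empty. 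Thus the condition $\chi'_o(G)\geq 2$ is precisely $G\notin\mathcal{O}$, under the convention that the empty graph is counted as odd.

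Putting these two facts together gives the biconditional exactly as stated: $\chi'_o(G)=2$ iff both $G\notin\mathcal{O}$ (ruling out the values $0$ and $1$) and the cycle parity condition holds (guaranteeing $\chi'_o(G)\leq 2$). I would present this as a short two-line argument rather than a fresh proof, since all the substance is already carried by the preceding Proposition.

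I do not anticipate any genuine obstacle here, as this is a routine corollary extracting the exact value $2$ from an inequality $\leq 2$ by excluding the smaller values. The only point requiring a modicum of care is the bookkeeping around the trivial values $0$ and $1$: I would make explicit that the empty graph falls under $\mathcal{O}$ (being vacuously odd) so that the single clause $G\notin\mathcal{O}$ cleanly excludes both $\chi'_o(G)=0$ and $\chi'_o(G)=1$. Beyond that, the statement is an immediate consequence of the Proposition.
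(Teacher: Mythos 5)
Your argument is correct and coincides with the paper's own (implicit) justification: the corollary is stated there without a separate proof, precisely because it is the immediate extraction of the value $2$ from the preceding Proposition's equivalence $\chi'_o(G)\leq 2 \Leftrightarrow$ cycle parity condition, together with the observation that $\chi'_o(G)=1$ exactly when $G\in\mathcal{O}$ (as the paper notes, $\mathcal{S}_1=\mathcal{O}$). Your care about the degenerate values is fine, and in fact moot: a subdivision of an odd graph has all degrees odd or equal to $2$, hence no isolated vertices, so the empty case does not genuinely arise within $\mathcal{S}$.
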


In the remainder of the paper we provide a structural characterization of the class $\mathcal{S}_4$. The next result shall allow us to confine to $2$-connected graphs.

 \begin{proposition}
    \label{block tree}
 If $G\in\mathcal{S}$ is a connected graph, then  $\chi'_o(G)=4$ if and only if every block of $G$ belongs to $\mathcal{S}_4$ and for every cutvertex $v$ there is a unique block $B$ such that $d_B(v)$ is odd.

\end{proposition}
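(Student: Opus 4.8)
The plan is to read the statement as a criterion for the bound $\chi'_o(G)\le4$ of Proposition~\ref{4-edge-colorability} being tight, i.e. for $G$ failing to be odd $3$-edge-colorable, and to prove the two implications separately after some common reductions. First I would note that $\chi'_o(G)=4$ rules out the hypotheses of Proposition~\ref{even order}, so such a $G$ has odd order and $\kappa'(G)\ge2$; in particular it is bridgeless, every block is $2$-connected, and every cut-vertex is odd (a $2$-vertex cut-vertex lies on two bridges). Since a member of $\mathcal{S}_4$ is likewise bridgeless, the right-hand side forces the same picture, and when $G$ is itself $2$-connected both sides reduce to $\chi'_o(G)=4$, giving the base case. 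The structural fact I would keep in front of me is this: if $B\in\mathcal{S}$ is a block containing a cut-vertex $v$, then $v$ is either a $2$-vertex or an odd vertex of $B$; as $\sum_{B\ni v}d_B(v)=d_G(v)$ is odd, an odd number of blocks meet $v$ in odd degree, so the hypothesis ``unique block with $d_B(v)$ odd'' means this number is exactly $1$ and every other block meets $v$ in precisely two edges (a pass-through).

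The common engine for both directions is a gluing lemma at a cut-vertex $v$: color each $v$-lobe (or block) with three colors so that the coloring is odd away from $v$, and record the parity vector in $(\mathbb{Z}/2)^3$ of the color-degrees at $v$; these local colorings assemble into a global odd $3$-edge-coloring exactly when, for every color present at $v$, the lobewise parity vectors sum to $1$ on that coordinate. The weight of each lobe's vector has the parity of $d_L(v)$, and permuting color names in a lobe permutes the vector, so the feasibility of the target ``all-odd-on-its-support'' pattern is a small problem in $(\mathbb{Z}/2)^3$ governed by which parity vectors each lobe can realize. Establishing that every $B\in\mathcal{S}$, with a chosen vertex $v$, admits such $3$-colorings odd away from $v$ with enough freedom in the realizable vectors (using Propositions~\ref{consequences:forest} and~\ref{sufficient condition 3-edge-colorability}, Corollaries~\ref{unicycle} and~\ref{odd co-forest}) is the technical core I would isolate first.

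For the forward implication I would argue contrapositively. If some cut-vertex $v$ carries odd degree in three or more blocks, the surplus of odd-weight parity vectors gives enough slack to steer their sum to the target, yielding an odd $3$-coloring. If instead some block $B\notin\mathcal{S}_4$—either $B\notin\mathcal{S}$, i.e. some cut-vertex has even degree $\ge4$ in $B$, or $B\in\mathcal{S}$ with $\chi'_o(B)\le3$—then that lobe admits an unconstrained odd $3$-coloring, or one carrying a free extra parity vector, again closing the gluing. For the backward implication, the genuine content, I would induct on the number of blocks by peeling an end-block $B$ with cut-vertex $v$: a pass-through end-block leaves $G^-=G-\mathrm{Int}_G(B)\in\mathcal{S}$ with all blocks still in $\mathcal{S}_4$ and uniqueness intact; and when $B$ is the odd block at $v$, uniqueness makes $v$ a $2$-vertex in every remaining block, so peeling $B$ turns $v$ into an ordinary $2$-vertex of $G^-\in\mathcal{S}$. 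Either way $\chi'_o(G^-)=4$ by induction, and I would transfer this back to $G$ through a strengthened hypothesis controlling the coloring near $v$, as in Proposition~\ref{consequences:forest}.

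The main obstacle is exactly this last transfer in the backward direction. A hypothetical odd $3$-coloring of $G$ does not restrict to $G^-$ or to $B$ by brute force, because the cut-vertex $v$—now a $2$-vertex after peeling—may inherit a monochromatic pair and break oddness; the whole weight of the ``unique odd block'' hypothesis must be used to show that the pass-through pairs cannot be colored so as to make the restriction odd on the odd block at $v$. Concretely, the crux is to upgrade the gluing lemma from a sufficient condition for $3$-colorability to a necessary one under the rigidity of a single odd block per cut-vertex, ruling out every cancellation that three colors could exploit. Framing and proving that two-sided gluing lemma, uniformly in the number of lobes at a cut-vertex, is the technical heart of the whole proposition.
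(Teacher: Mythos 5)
Your proposal inverts where the difficulty of this proposition actually sits, and the step you defer is the one that cannot be deferred. The direction you call ``the genuine content'' (block conditions $\Rightarrow\chi'_o(G)=4$) needs no two-sided gluing lemma and no ``upgrade to necessity'': it is a short restriction argument, and your own structural observation already contains it. If $\varphi$ were an odd $3$-edge-coloring of $G$, its restriction to each lobe at a cut-vertex $v$ is odd away from $v$; if such a restriction were also odd at $v$, it would be an odd $3$-edge-coloring of a member of $\mathcal{S}_4$, which is impossible. Hence on each pass-through lobe ($d=2$ at $v$) the restriction is monochromatic, i.e.\ even, at $v$; subtracting these even contributions from the (odd) color-degrees of $\varphi$ at $v$ shows every color appearing at $v$ has odd degree inside the unique odd lobe, so that lobe inherits a full odd $3$-edge-coloring --- contradiction. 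Your worry that ``the restriction may break oddness'' is exactly the mechanism of the proof, not an obstacle to it. Conversely, the direction you dispatch with ``enough slack to steer the parity vectors'' is the real technical heart, and your plan supplies none of its constructions. First, all of your lobewise colorings odd away from $v$ rest on spanning odd co-forests (Corollary~\ref{odd co-forest}), which exist only in graphs of even order; so you must first prove that every lobe has odd order (the paper's Claim~1), a parity fact absent from your outline. Second, and most seriously, when a block fails membership in $\mathcal{S}$ --- i.e.\ $d_{G_i}(v)$ is even and $\geq4$ --- the required coloring with two colors odd and one even at $v$ does not follow from any parity bookkeeping in $(\mathbb{Z}/2)^3$: the paper has to manufacture an even cycle through $v$ (two odd paths from neighbors of $v$, a first-meeting-vertex argument giving an even detour), split $v$ along that cycle, and invoke Proposition~\ref{sufficient condition 3-edge-colorability}. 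This realizability of parity vectors is precisely what you flag as ``the technical core I would isolate first'' and then never prove; without it both of your implications are empty.

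There is also a concrete error in your backward induction. If the end-block $B$ you peel is the odd block at its cut-vertex $v$ and $v$ lies in $k\geq3$ blocks, then $d_{G^-}(v)=d_G(v)-d_B(v)=2(k-1)\geq4$ is even, so $G^-\notin\mathcal{S}$ and your inductive hypothesis does not apply; your claim that $v$ ``turns into an ordinary $2$-vertex of $G^-\in\mathcal{S}$'' is valid only when $k=2$. This is repairable --- a pass-through end-block always exists under the uniqueness hypothesis (take the end of a longest path in the block-cutpoint tree: its sibling subtrees are single blocks, and at most one block at $v$ is odd), or one can do as the paper does and prove the lobe-version equivalence for an arbitrary number of lobes at a single cut-vertex, inducting on the number of cut-vertices rather than on blocks; the latter also resolves the mismatch in your per-cut-vertex gluing, where the lobes being assembled are unions of blocks rather than the single blocks your hypothesis speaks about.
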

\begin{proof}
The essential part of our proof is to establish property $(P)$ below, which sheds some light on the structure of graphs $G\in\mathcal{S}$ of connectivity $\kappa(G)=1$ that require four colors for an odd edge-coloring.

\medskip

\noindent $(P)$ \textit{Let $v$ be a cutvertex of a connected graph $G\in\mathcal{S}$. If $G_1,\ldots,G_k$ are the $v$-lobes of $G$, then the following statements are equivalent:}
\begin{itemize}
\item[$(i)$]  \textit{$\{G_1,G_2,\ldots,G_k\}\subseteq\mathcal{S}_4$  and there is a unique $j$ such that $d_{G_j}(v)$ is odd; in particular, $d_{G_i}(v)=2$ for every $i\neq j$.}
\item[$(ii)$] \textit{$G\in\mathcal{S}_4$.}
\end{itemize}

Notice that, once the equivalence stated in $(P)$ is verified, the proposition may be derived by inducting on the number $t$ of cutvertices in $G$. Namely, the case $t=0$ is trivial, and the case $t=1$ follows immediately from $(P)$. For $t>1$, consider a cutvertex $v$ which is an internal leaf of the block-tree $B(G)$; in other words, $v$ is such that all but one of the blocks containing it are end-blocks of $G$. Let $G_1,\ldots,G_k$ be an enumeration of the $v$-lobes of $G$, so that $G_2,\ldots,G_k$ are end-blocks of $G$. Notice that the blocks of $G_1$ are precisely the blocks of $G$ that are $\neq G_2,\ldots,G_k$, whereas the cutvertices of $G_1$ are the cutvertices of $G$ distinct from $v$. In particular, the number of cutvertices of $G_1$ is $t-1$. Within the graph $G_1$, the vertex $v$ is an internal vertex of some block $B_1$, and thus $d_{G_1}(v)=d_{B_1}(v)$. By applying $(P)$ to the pair $G,v$ we deduce that $G\in\mathcal{S}_4$ if and only if  $G_1,G_2,\ldots,G_k\in\mathcal{S}_4$ and there is a unique $j$ such that $d_{G_j}(v)$ is odd, where it may happen that $j\neq1$. Combine this equivalence with the inductive hypothesis applied to $G_1$, and we are done.

\medskip

In what follows, we verify the property $(P)$ by proving the implications $(i)\Rightarrow(ii)$ and $(ii)\Rightarrow(i)$.
First we show the easy part, which is the direction $(i)\Rightarrow(ii)$. Arguing by contradiction, suppose that both $(i)$ and $\neg(ii)$  hold. Consider an odd $3$-edge-coloring $\varphi$ of $G$. For every $i=1,\ldots,k$, let $\varphi_i$ be the restriction of $\varphi$ to $E(G_i)$, that is, $\varphi_i=\varphi_{|E(G_i)}$. Clearly, each $\varphi_i$ is odd away from $v$. Moreover, since $\chi'_o(G_i)=4$, the edge-coloring $\varphi_i$  is not odd at $v$. Consequently, whenever $d_{G_i}(v)=2$ the edge set $E_{G_i}(v)$ is monochromatic under $\varphi_i$. However, then $\varphi_j$ must be an odd $3$-edge-coloring of $G_j$, a contradiction.

\medskip

Now let us prove direction $(ii)\Rightarrow(i)$. Assuming $(ii)$, note that degree $d_G(v)$ is odd by Proposition~\ref{4-edge-colorability}. We break the argument into several claims  which eventually lead to $(i)$.

\bigskip

\noindent \textbf{Claim 1.} \textit{If $G'\cup G''=G$ and $G'\cap G''=\{v\}$, then both orders $n(G'),n(G'')$ are odd. Moreover, if $d_{G'}(v)$ is even then any $3$-edge-coloring of $G'$ which is odd away from $v$ must be even at $v$; in particular, $G'$ is not odd $3$-edge-colorable.}

\medskip

\noindent By Proposition~\ref{even order}, $n(G)$ is odd. Hence, $n(G'),n(G'')$ are of the same parity. Suppose $n(G'),n(G'')$ are even. Then there exist spanning odd co-forests $H'$ and $H''$ of $G'$ and $G''$, respectively. Denote $F=G-E(H'\cup H'')$. Note that $F$ is a forest and $d_F(v)$ is odd. This enables construction of an odd $3$-edge-coloring of $G$ as follows: color $E(H')$ by $1$ and $E(H'')$ by $2$; color $E_F(v)$ by $3$; extend the coloring of $E_{F\cap G'}(v)$ to an edge-coloring of the forest $F\cap G'$ with color set $\{2,3\}$ that is odd away from $v$; similarly, extend the coloring of $E_{F\cap G''}(v)$ to an edge-coloring of the forest $F\cap G''$ with color set $\{1,3\}$ that is odd away from $v$. The obtained contradiction shows that $n(G'),n(G'')$ are both odd.

Suppose that $d_{G'}(v)$ is even and $G'$ admits a $3$-edge-coloring $\varphi'$ which is odd away from $v$ so that at least one, and hence two, of the colors are odd at $v$.
Assume the color set of $\varphi'$ is $\{1,2,3\}$ and the colors $1$ and $2$ are odd at $v$. Since $d_{G'}(v)$ is even, it follows that the color $3$ is even at $v$. We construct an accompanying edge-coloring $\varphi''$ of $G''$. In order to do so, consider an auxiliary graph $G^*=G''+vv^*$, where $v^*$ is a new vertex. Since $G^*$ is a connected graph of even order and the degree $d_{G^*}(v)$ is even, Proposition~\ref{even order} yields an odd edge-coloring of $G^*$ with color set $\{1,2,3\}$ such that $E_{G^*}(v)$ is colored by $2$ and $3$ with the edge $vv^*$ colored by $2$. Let $\varphi''$ be the restriction to $E(G'')$ of this coloring of $E(G^*)$. However, then $\varphi'\cup\varphi''$ is an odd $3$-edge-coloring of $G$. The obtained contradiction proves our point.\hfill$\diamond$

\bigskip

From the first part of Claim~1 it follows that every $v$-lobe of $G$ has an odd order. Next we use the last part of Claim~1 to show that the degree of $v$ is odd in regard to precisely one $v$-lobe.

\bigskip

\noindent \textbf{Claim 2.} \textit{There is a unique $j\in\{1,2,\ldots,k\}$ such that $d_{G_j}(v)$ is odd.}

\medskip

\noindent Since $d_G(v)$ is odd, so is $d_{G_j}(v)$ for some $j$. Suppose there are at least two such indices, say $j=1$ and $j=2$. It follows that $k\geq3$. Let $G'=G_{1}\cup G_{2}$ and $G''=\bigcup\{G_i:i=3,\ldots,k\}$. As both $G_1-v,G_2-v$ are connected graphs of even order, there exist spanning odd co-forests $H_1$ and $H_2$ of $G_1-v$ and $G_2-v$, respectively. By Proposition~\ref{consequences:forest}, take an edge-coloring of $\widehat{H_j}=G_j-E(H_j)$ with color set $\{1,2\}$ which is odd away from $v$ and so that the color $j$ is odd at $v$ in $\widehat{H_j}$, $j=1,2$.
Extend to $E(G')$ by coloring $E(H_1\cup H_2)$ with $3$. This furnishes an odd $3$-edge-coloring of $G'$. However, as $d_{G'}(v)$ is even, the obtained coloring contradicts the last part of Claim~1. \hfill$\diamond$

\bigskip

Proceed by showing that each $v$-lobe is a subdivision of an odd graph.

\bigskip

\noindent \textbf{Claim 3.} \textit{$G_i\in \mathcal{S}$ for every $i=1,2,\ldots,k$.}

\medskip

\noindent Supposing the opposite, there is a $v$-lobe $G_r$ such that $d_{G_r}(v)$ is even and $d_{G_r}(v)\geq4$. First we show that there exists of an even cycle $C\subseteq G_r$ with $v\in V(C)$. For this we may assume that every bouquet of $G_r$ incident with $v$ is a singleton (otherwise, there is a $2$-cycle through $v$). Since $d_{G_r}(v)\geq4$, consider a triplet $x,y,z\in N_{G_r}(v)$. If there is an even $x$-$y$ path in $G_r-v$, we are obviously done. So, as $G_r-v$ is connected, let $Q$ be an odd $x$-$y$ path. We exhibit an even path in $G_r-v$ going from $z$ to the set $\{x,y\}$.  In view of the connectedness of $G_r-v$, let $P$ be an odd $z$-$x$ path. On a traversal of $P$ from $z$ to $x$, say $w$ is the first vertex that belongs to $Q$.  Then $zPw\cup wQx$ and $zPw\cup wQy$ are paths (by the choice of $w$). Because $wQx$ and $wQy$ are of opposite parities, we have found an even $z$-$\{x,y\}$ path in $G_r-v$, which in turn yields an even cycle $C\subseteq G_r$ passing through $v$.

Next, we use the presence of the even cycle $C$ to show that there exists a $3$-edge-coloring $\varphi_r$ of $G_r$ which is odd away from $v$, and at $v$ two of the colors are odd. Consider the following local modification of $G_r$ that consists of splitting out $v$ entirely into $2$-vertices and then suppressing all of them except one: create a $2$-vertex $v'$ incident to the two edges forming $E_C(v)$; then arbitrarily split out the rest of $v$ into $2$-vertices and suppress them all (except $v'$). Every even vertex of the resulting connected graph $G_r'$ is a $2$-vertex, and $C$ is an even cycle (properly contained) in $G_r'$ that passes through its $2$-vertex $v'$. By Proposition~\ref{sufficient condition 3-edge-colorability}, $G_r'$ admits an odd $3$-edge-coloring. Returning to $G_r$, we obtain the desired $\varphi_r$. However, since $d_{G_r}(v)$ is even, the edge-coloring $\varphi_r$ contradicts with the second part of Claim~1. \hfill$\diamond$

\bigskip

The final piece of our argument is showing that no $v$-lobe is odd $3$-edge-colorable.

\bigskip

\noindent \textbf{Claim 4.} \textit{$\chi'_o(G_i)=4$ for every $i=1,2,\ldots,k$.}

\medskip

\noindent By Claim~1, no $v$-lobe $G_i$ with $d_{G_i}(v)=2$ is odd $3$-edge-colorable.
Suppose that the (unique) $v$-lobe $G_j$ having odd $d_{G_j}(v)$ is odd $3$-edge-colorable.
Take an odd edge-coloring $\varphi_j$ of $G_j$ with color set $\{1,2,3\}$ such that the color $1$ appears  on $E_{G_j}(v)$. Consider now an arbitrary $G_i$ with $i\neq j$. Letting $v^*$ be a new vertex, the graph $G_i+vv^*$ is connected and of even order, hence it admits an odd $3$-edge-coloring with color set $\{1,2,3\}$ under which the pendant edge $vv^*$ receives the color $1$. Denote by $\varphi_i$ the restriction to $E(G_i)$ of the constructed edge-coloring of $G_i+vv^*$. Note that $\varphi_i$ is odd away from $v$ and colors $E_{G_i}(v)$ with $1$, for otherwise $G_i$ would be odd $3$-edge-colorable. However then the union $\varphi_1\cup\cdots\cup\varphi_k$ is an odd $3$-edge-coloring of $G$, a contradiction.
\hfill$\diamond$

\bigskip

This completes the verification of $(ii)\Rightarrow(i)$, and thus settles the property $(P)$ and the proposition.
\end{proof}

A straightforward implication of Propositions~\ref{sufficient condition 3-edge-colorability} and~\ref{block tree} is that each connected member of $\mathcal{S}_4$ must have precisely one vertex of degree $2$.

\begin{corollary}
    \label{cor:single 2-vertex}
If $G\in\mathcal{S}_4$ is connected, then the set of its $2$-vertices is a singleton.
\end{corollary}
\begin{proof}
We prove the corollary by induction on $n(G)$. Assume first that $G$ has a cutvertex $v$. Then by the induction hypothesis, every $G_i$ with $d_{G_i}(v)=2$ in $(P)$ has the unique $2$-vertex $v$. So the unique $2$-vertex of $G_j$ with $d_{G_j}(v)$ odd is the unique $2$-vertex of $G$. Hence we may assume that
 $G$ is a block graph. Since $G\in \mathcal{S}_4$, its order is odd (by Proposition~\ref{even order}), and there is a $2$-vertex $v\in V(G)$.  Hence, $G$ is $2$-connected, that is, every pair of its vertices lie on a cycle. Therefore, by Proposition~\ref{sufficient condition 3-edge-colorability}, $v$ is the only $2$-vertex in $G$.
\end{proof}

In view of Proposition~\ref{2-connected} and Corollary~\ref{cor:single 2-vertex}, we are left with the task of determining which $2$-connected loopless graphs with a single $2$-vertex belong to $\mathcal{S}_4$. We proceed to describe a construction which shall enable us to narrow down the search to essentially $3$-edge-connected graphs.

\begin{figure}[ht!]
	$$
		\includegraphics[scale=0.8]{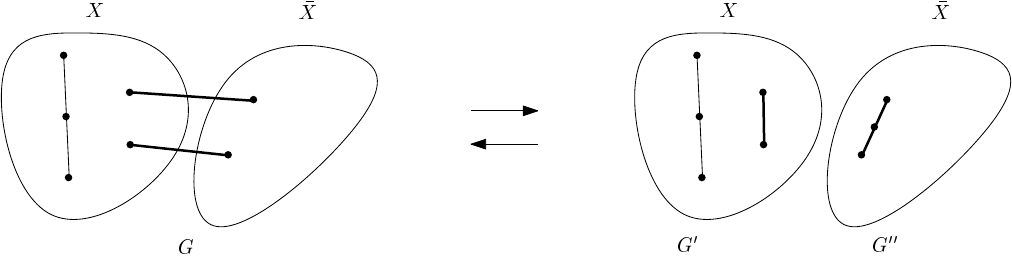}
	$$
	\caption{A $2$-connected graph $G$ (left), and graphs $G',G''$ (right). The fat edges form the symmetric difference $E(G)\oplus(E(G')\cup E(G''))$.}
	\label{fig:konstrukcija''}
\end{figure}

Consider a $2$-connected loopless graph $G$ that is obtainable from an odd graph by a single edge subdivision. Thus $\kappa(G)=\kappa'(G)=\delta(G)=2$. Assume $G$ is not essentially $3$-edge-connected, that is, let there be a nontrivial $2$-edge cut $\partial(X)$. Say the unique $2$-vertex of $G$ falls in $X$, and consider the graphs $G'$ and $G''$ constructed as follows (see also Figure~\ref{fig:konstrukcija''}):

\begin{itemize}
\item[$(i)$] $G'$ is derived from $G[X]$ by adding an edge between the two endvertices of $\partial(X)$ in $X$;
\item[$(ii)$] $G''$ is obtained from $G[\overline{X}]$ by introducing a new vertex and joining it with the two endvertices of $\partial(X)$ in $\overline{X}=V(G)\backslash X$; (equivalently, $G''$ is derived from $G$ by shrinking $X$ to a single new vertex, i.e. $G''=G/X$).
\end{itemize}

Notice that both $G',G''$ are $2$-connected and obtainable from odd graphs by single edge subdivisions.
Reversing the process, let $G',G''$ be disjoint $2$-connected loopless graphs that are obtained from odd graphs by single edge subdivisions. Break a selected edge of $G'$ into two half-edges, then remove the $2$-vertex of $G''$ along with its incident half-edges, and finally pair up and glue the half-edges emanating from $G'$ with the corresponding half-edges emanating from $G''$ so that two new (whole) edges are created. Call this process \textit{gluing} of $G'$ and $G''$ (with respect to a selected edge of $G'$). The  resulting graph $G$ is also $2$-connected and obtainable from an odd graph by a single edge subdivision, moreover it has a nontrivial $2$-edge cut.

\begin{figure}[ht!]
	$$
		\includegraphics[scale=0.8]{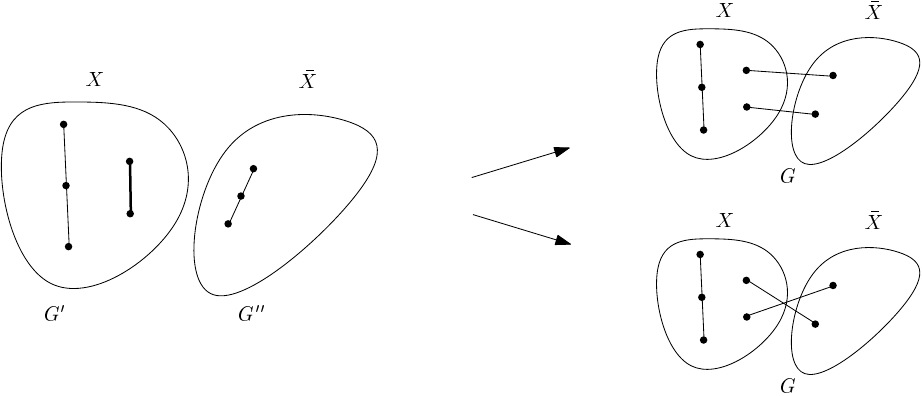}
	$$
	\caption{Graphs $G',G''$ (left), and the two ways to obtain a graph $G$ (right) by gluing $G',G''$ in respect of a selected edge of $G'$ (depicted as fat).}
	\label{fig:konstrukcija'''}
\end{figure}
We point out here that the result of gluing such a disjoint pair $G',G''$ is not unique, because of an apparent  $2$-fold freedom involved in the process: first, there is a freedom of choice due to the arbitrariness of the selected edge from $G'$; and second, there is freedom concerning the pairing the half-edges emanating from $G'$ with the half-edges emanating from $G''$ (cf. Figure~\ref{fig:konstrukcija'''}).

The importance of transforming $G$ into the pair $G',G''$ and vice versa comes from the following.

\begin{proposition}
    \label{glue}
Let $G$ be a $2$-connected loopless graph that is obtained from an odd graph by a single edge subdivision, and let $\partial(X)$ be a nontrivial $2$-edge cut in $G$ such that the unique $2$-vertex is in $X$. With $G',G''$ as described above, the following equivalence holds:
\begin{equation*}
\chi'_o(G)=4 \quad\text{ if and only if }\quad \chi'_o(G')=\chi'_o(G'')=4\,.
\end{equation*}
\end{proposition}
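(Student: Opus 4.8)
The plan is to prove the equivalent contrapositive statement
\[
\chi'_o(G)\le 3 \quad\Longleftrightarrow\quad \chi'_o(G')\le 3 \ \text{ or }\ \chi'_o(G'')\le 3 ,
\]
which is equivalent to the asserted one because all three graphs lie in $\mathcal{S}$ and hence are odd $4$-edge-colorable by Proposition~\ref{4-edge-colorability} (so ``$=4$'' means ``$\not\le 3$''). Write the two cut edges as $a=x_1y_1$ and $b=x_2y_2$ with $x_1,x_2\in X$ and $y_1,y_2\in\bar X$. The central device is to examine each side of the cut through its \emph{boundary pattern}: for $S\in\{X,\bar X\}$ let $G_S$ be $G[S]$ with one pendant edge hung at each of its two cut-endpoints, and call a pair $(\alpha,\beta)\in\{1,2,3\}^2$ \emph{achievable on} $S$ if $G_S$ has an odd $3$-edge-coloring giving these colors to its two pendant edges. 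Since the cut consists of exactly $a,b$, an odd $3$-edge-coloring of $G$ is precisely a pattern achievable on both sides at once (the pendants playing the role of the half-edges $a,b$). As a single side carries no external color constraint, achievability is invariant under permuting the three colors; hence only two types of pattern matter, the \emph{monochromatic} ones $(c,c)$ and the \emph{dichromatic} ones $(\alpha,\beta)$ with $\alpha\ne\beta$, each type forming a single permutation orbit. Consequently $\chi'_o(G)\le 3$ holds exactly when both sides admit a monochromatic pattern, or both sides admit a dichromatic one.

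Next I would translate the four one-sided conditions into auxiliary graphs. Merging the two pendants of $G_S$ into one edge joining the cut-endpoints shows that $S$ admits a monochromatic pattern iff $G[S]+(\text{such an edge})$ is odd $3$-edge-colorable; for $S=X$ this is exactly $G'$, and for $S=\bar X$ it is $H_{\bar X}:=G[\bar X]+y_1y_2$. Replacing the two pendants by a common new $2$-vertex shows that $S$ admits a dichromatic pattern iff $G[S]+(\text{a $2$-vertex adjacent to both cut-endpoints})$ is odd $3$-edge-colorable (the forced oddness at the degree-$2$ vertex is what makes its two edges take distinct colors); for $S=\bar X$ this is exactly $G''=G/X$, and for $S=X$ it is $H_X:=G/\bar X$. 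Feeding these four equivalences into the dichotomy of the first paragraph yields the master equivalence
\[
\chi'_o(G)\le 3 \iff \bigl(\chi'_o(G')\le 3 \ \wedge\ \chi'_o(H_{\bar X})\le 3\bigr)\ \vee\ \bigl(\chi'_o(H_X)\le 3 \ \wedge\ \chi'_o(G'')\le 3\bigr).
\]

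The crux is then to show that the two \emph{auxiliary} graphs are always odd $3$-edge-colorable, for then the right-hand side collapses to $\chi'_o(G')\le 3$ or $\chi'_o(G'')\le 3$. This is where the parity bookkeeping does the work. Because $\bar X$ contains no $2$-vertex, every vertex of $H_{\bar X}$ has odd degree (the added edge $y_1y_2$ restores the parity lost by deleting $a,b$), so $H_{\bar X}$ is an odd graph and is trivially odd $1$-edge-colorable. Being odd, $H_{\bar X}$ has even order, so $|\bar X|$ is even; since $n(G)$ is odd (a single subdivision of the even-order odd graph), $|X|=n(G)-|\bar X|$ is odd, and hence $H_X=G/\bar X$ is connected of even order $|X|+1$. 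By Proposition~\ref{even order} it is odd $3$-edge-colorable, which settles the claim.

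The main obstacle is exactly the backward implication coming from $G''$: a $3$-coloring of $G''$ only certifies a dichromatic pattern on the $\bar X$-side, and to glue it into $G$ one must independently produce a dichromatic pattern on the $X$-side, that is, color $G_X$ with distinctly colored pendants. Unlike the $\bar X$-side (which becomes an odd graph and can simply be colored monochromatically), the $X$-side carries the genuine $2$-vertex of $G$ and need not be odd, so the even-order argument for $H_X$ is precisely what guarantees the needed flexibility. By contrast the forward implication is visible directly: given an odd $3$-edge-coloring $\varphi$ of $G$, the case $\varphi(a)=\varphi(b)$ realizes a monochromatic pattern and yields an odd $3$-edge-coloring of $G'$ (recolor the contracted edge $x_1x_2$ by that common color), while $\varphi(a)\ne\varphi(b)$ realizes a dichromatic pattern and yields one of $G''$. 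Finally I would dispatch the degenerate configurations separately --- when a cut-endpoint is itself the $2$-vertex, or when the two cut edges share an endpoint so that $G'$ acquires a loop or $H_X$ a parallel pair --- noting that loops are irrelevant to odd edge-colorings and that the order parities computed above are unaffected.
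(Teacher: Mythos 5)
Your proof is correct and takes essentially the same route as the paper's, just packaged more systematically: your observation that $H_{\bar{X}}$ is an odd graph is exactly what the paper exploits when it extends an odd $3$-edge-coloring of $G'$ monochromatically across the cut, your $H_X=G/\bar{X}$ is precisely the paper's graph $G'\%e$ dispatched by the same even-order argument (Proposition~\ref{even order}), and your forward direction is the paper's identical case analysis on whether the two cut edges receive equal or distinct colors. As a minor remark, the degenerate configurations you defer at the end cannot actually arise: two cut edges sharing an endpoint would make that endpoint a cut-vertex of the $2$-connected graph $G$, so $x_1\neq x_2$ and $y_1\neq y_2$ automatically.
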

\begin{proof}
Assuming $\chi'_o(G)=4$, we argue by contradiction that $\chi'_o(G')=\chi'_o(G'')=4$. In view of Proposition~\ref{4-edge-colorability}, more than four colors are never required. Let $e\in E(G')\backslash E(G)$. Suppose $\chi'_o(G')\leq3$ and consider an odd $3$-edge-coloring $\varphi$ of $G'$. Extend the restriction $\varphi_{|E(G[X])}$ to $E(G)$ by using the color $\varphi(e)$ for $E(G)\backslash E(G[X])$. This gives an odd $3$-edge-coloring of $G$, a contradiction.

Suppose now that $\chi'_o(G'')\leq3$. We already know from Corollary~\ref{cor:single 2-vertex} (or from Proposition~\ref{even order}) that the graph $G'*e$, obtained from $G'$ by introducing a $2$-vertex on the edge $e$, is odd $3$-edge-colorable. However then an odd $3$-edge-coloring of $G$ arises by combining an odd $3$-edge-coloring of $G'*e$ and an odd $3$-edge-coloring of $G''$ with the same color set (after possibly permuting colors in the latter). This contradiction settles the issue that $\chi'_o(G)=4$ implies $\chi'_o(G')=\chi'_o(G'')=4$.

Let us show the reversed implication by contrapositive. Assume an odd $3$-edge-coloring of $G$ exists and consider its restrictions over the $2$-edge cut $\partial(X)$. If the two edges forming this cut are colored the same, then we have an odd $3$-edge-coloring of $G'$. Otherwise, if the two edges are colored differently, then an odd $3$-edge-coloring of $G''$ readily appears.
\end{proof}

We end this section with a property shared by all $2$-connected members of $\mathcal{S}_4$.

\begin{proposition}
    \label{lema:1}
If $G\in\mathcal{S}_4$ is a block graph, then $G$ can be obtained from a bipartite block odd graph by a single edge subdivision.
\end{proposition}

\begin{proof}
Since $G\in \mathcal{S}_4$, from Corollary~\ref{cor:single 2-vertex} it follows that there is a single $2$-vertex $v\in V(G)$. Our task is to prove that the graph $G\%v$, obtained from $G$ by suppressing $v$, is a bipartite block odd graph. The graph $G$ is $2$-connected. Hence, as $v$ is the only $2$-vertex of $G$, we have that $G\%v$ is a block odd graph. Concerning the bipartiteness of $G\%v$, by Proposition~\ref{sufficient condition 3-edge-colorability}, every cycle of $G$ passing through $v$ is odd. We are left to show that every cycle of $G$ that avoids $v$ is even.

Letting $N_G(v)=\{u,w\}$, suppose there is an odd cycle $C_o$ in $G-v$. By the $2$-connectedness of $G$, there exist two disjoint $\{u,w\}$-$V(C_o)$ paths, say a $u$-$u'$ path $P$ and a $w$-$w'$ path $Q$. Let $R$ denote the $u'$-$w'$ path along $C_o$ which is even (resp. odd) if $P$ and $Q$ have same (resp. opposite) parities. Then $P\cup R\cup Q$ is an even $u$-$w$ path in $G-v$. However, we have already established that no cycle through $v$ is even. This contradiction proves our point, that is, the graph $G\%v$ is a bipartite block odd graph.
\end{proof}

 \begin{figure}[ht!]
	$$
		\includegraphics[scale=0.75]{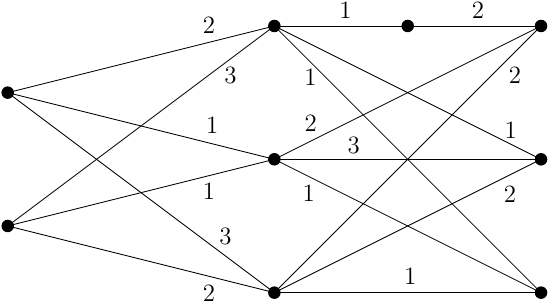}
	$$
	\caption{An odd $3$-edge-coloring of the graph obtained from $K_{3,5}$ by a single edge subdivision.}
	\label{fig:k35}
\end{figure}

Not every graph which can be obtained from a bipartite block odd graph by a single edge subdivision belongs in $\mathcal{S}_4$. For example, it can be readily seen from Figure~\ref{fig:k35} that the graph obtained from $K_{3,5}$ by a single edge subdivision is odd $3$-edge-colorable. On the other hand, in view of Theorem~\ref{thm:characterizationsubcubic}, this is not the case for the analogous graph obtained from $K_{3,3}$.

Note in passing that if $G',G''$ are disjoint $2$-connected graphs each obtainable from a bipartite odd graph by a single edge subdivision, then the result of any gluing of $G',G''$ is another such graph.  In the section we resolve the question which $2$-connected graphs obtainable from a bipartite odd graph by a single edge subdivision belong to the class $\mathcal{S}_4$.

\section{Characterization of $\mathcal{S}_4$}

As a result of Proposition~\ref{block tree}, we may confine to $2$-connected graphs. Let us denote by $\mathcal{F}$ the family defined inductively as follows:

\begin{itemize}
\item[$(a)$] every Shannon triangle of type $(2,1,1)$ and minimum degree $2$ belongs to $\mathcal{F}$;
\item[$(b)$] every graph that can be obtained by a single edge subdivision from a $3$-edge-connected bipartite cubic graph of order at least $4$  belongs to $\mathcal{F}$;
\item[$(c)$] every other graph $G$ in $\mathcal{F}$ can be constructed by taking disjoint members $G',G''\in\mathcal{F}$, and gluing them together.
\end{itemize}

\begin{figure}[ht!]
	$$
		\includegraphics[scale=0.9]{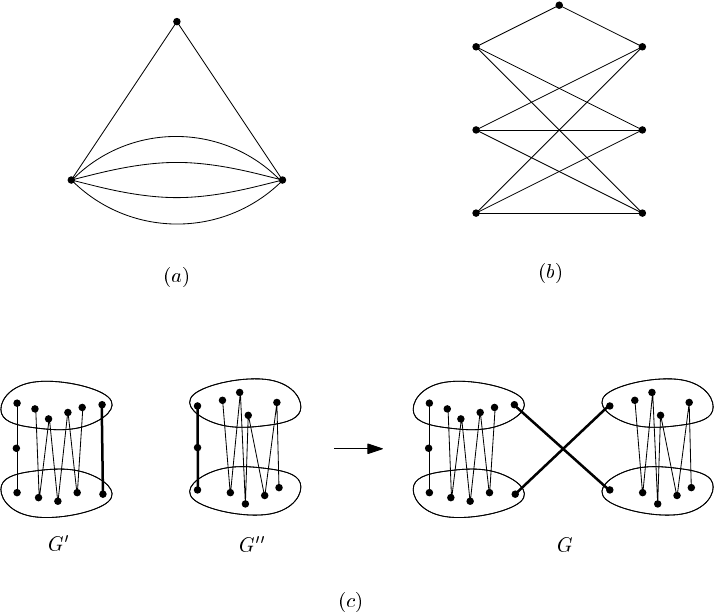}
	$$
	\caption{(a) A Shannon triangle of type $(2,1,1)$ with $\delta=2$; (b) A single edge subdivision of $K_{3,3}$; (c) Disjoint graphs $G',G''\in\mathcal{F}$ (left) and a graph $G$  (right) obtained by gluing $G',G''$ (the fat edges form the symmetric difference $(E(G')\cup E(G'')\oplus E(G)$).}
	\label{fig:family}
\end{figure}

We point out that the condition $\delta=2$ is included in $(a)$ in order to stay within $\mathcal{S}$. Clearly, $\mathcal{F}\subseteq \mathcal{S}$ and every member of $\mathcal{F}$ is a $2$-connected graph.  Note in passing that parts $(a)$,$(b)$ and $(c)$ of the above constructive definition of $\mathcal{F}$ are pairwise disjoint: every graph from $(a)$ is of order $3$ whereas every graph from $(b)$ or $(c)$ is of odd order at least $5$; every graph from $(b)$ is essentially $3$-edge-connected whereas every graph from $(c)$ has a nontrivial $2$-edge cut (cf. Figure~\ref{fig:family}).
The family $\mathcal{F}$ happens to be vital for our desired characterization. Namely, it turns out that a block graph $G$ belongs to $\mathcal{S}_4$ if and only if it belongs to $\mathcal{F}$, which we prove next.

\begin{theorem}
    \label{2-connected}
Let $G\in \mathcal{S}$ be a block graph. Then the following statements are equivalent:
\begin{itemize}
\item[$(i)$] $G\in \mathcal{F}$;
\item[$(ii)$] $G\in \mathcal{S}_4$.

\end{itemize}
\end{theorem}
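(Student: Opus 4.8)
The plan is to prove both implications, handling the forward direction $(i)\Rightarrow(ii)$ by a short structural induction and carrying the real weight of the argument in the reverse direction $(ii)\Rightarrow(i)$ by an induction on $n(G)$.

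For $(i)\Rightarrow(ii)$ I would induct on the rule by which $G$ enters $\mathcal{F}$. If $G$ is produced by rule $(a)$, it is a Shannon triangle of type $(2,1,1)$, so $\chi'_o(G)=2+1+1=4$ by~\eqref{eqn:shan}. If $G$ is produced by rule $(b)$, it is a single edge subdivision of a cubic bipartite graph and hence subcubic, so Theorem~\ref{thm:characterizationsubcubic} gives $\chi'_o(G)=4$. Finally, if $G$ is produced by rule $(c)$ as a gluing of $G',G''\in\mathcal{F}$, then $\chi'_o(G')=\chi'_o(G'')=4$ by the inductive hypothesis, and Proposition~\ref{glue} yields $\chi'_o(G)=4$. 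In each case $G\in\mathcal{S}_4$.

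For $(ii)\Rightarrow(i)$ I would argue by induction on $n(G)$. Proposition~\ref{block graph} shows that $G$ arises from a $2$-connected bipartite odd graph $G_0=G\%v$ by subdividing a single edge with the unique $2$-vertex $v$; since $G_0$ is bipartite, every cycle through $v$ is odd and every other cycle is even, and by Proposition~\ref{even order} the order $n(G)$ is odd, so $G_0$ has even order. I then split on whether $G$ has a nontrivial $2$-cut. If it does, Proposition~\ref{glue} produces $G',G''$, each a single edge subdivision of a $2$-connected bipartite odd graph (hence $2$-connected and in $\mathcal{S}$) with $\chi'_o(G')=\chi'_o(G'')=4$; as both are strictly smaller members of $\mathcal{S}_4$, the inductive hypothesis places them in $\mathcal{F}$, and rule $(c)$ then places $G$ in $\mathcal{F}$. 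This reduces everything to the essentially $3$-edge-connected case, where I would prove the dichotomy: either $G_0$ has exactly two vertices, in which case $G$ is a Shannon triangle of type $(2,1,1)$ of minimum degree $2$ and so lies in $\mathcal{F}$ by rule $(a)$, or $G_0$ is cubic, in which case $G\in\mathcal{F}$ by rule $(b)$.

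The hard part will be excluding every remaining essentially $3$-edge-connected graph, that is, proving that if $G_0$ has at least three vertices and a vertex $u$ of degree at least $5$ (which is forced once $G_0$ is non-cubic, all its degrees being odd and at least $3$), then $\chi'_o(G)\le 3$, contradicting $G\in\mathcal{S}_4$. Because $v$ is the only $2$-vertex and all cycles through it are odd, Proposition~\ref{sufficient condition 3-edge-colorability} does not apply to $G$ as it stands, so a genuinely new construction is needed. I expect to exploit the flexibility at the degree-$\ge 5$ vertex $u$ together with the $2$-connectivity and even order of $G_0$, either by splitting off a pair of edges at $u$ so as to route an even cycle through a newly created $2$-vertex and thereby reduce to a Proposition~\ref{sufficient condition 3-edge-colorability}-type situation in the spirit of Claim~3 in the proof of Proposition~\ref{block tree}, or by assembling the three odd color classes directly from a well-chosen spanning odd co-forest (Corollary~\ref{odd co-forest}) whose complement splits into two further odd parts. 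Showing that such a coloring always exists, and that essential $3$-edge-connectivity with $u$ of degree $\ge 5$ rules out the obstruction witnessed by the subdivided $K_{3,3}$ (while permitting the subdivided $K_{3,5}$ of Figure~\ref{fig:k35}), is where the bulk of the casework lies and is the principal obstacle of the theorem.
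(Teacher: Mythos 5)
Your reduction skeleton coincides with the paper's: the direction $(i)\Rightarrow(ii)$ is handled exactly as in the paper (equality~\eqref{eqn:shan} for rule $(a)$, the fact that a single edge subdivision of a cubic bipartite graph is not odd $3$-edge-colorable for rule $(b)$, Proposition~\ref{glue} for rule $(c)$), and your reverse direction, via Propositions~\ref{block graph} and~\ref{glue} and a minimality/induction argument, correctly reduces $(ii)\Rightarrow(i)$ to the $2$-connected, essentially $3$-edge-connected case, where the required dichotomy ($n(G)=3$, giving a Shannon triangle of type $(2,1,1)$, or $\Delta(G)=3$, giving a subdivided cubic bipartite graph) is precisely the paper's Lemma~\ref{lema}. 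But you do not prove that lemma: you only name two candidate strategies and declare the casework to be ``the principal obstacle.'' That lemma \emph{is} the theorem's substance; in the paper its proof takes several pages, built on the absence of ``forbidden cycle pairs'' (Claim~1: no connected even subgraph of even order through $v$), a structural analysis of the paths $Q_e$ and large bouquets at vertices of degree $\geq 5$ (Claims~2--4), and an explicit augmentation algorithm producing a subgraph $H$ whose odd $3$-edge-coloring extends to all of $G$ by coloring $\hat{H}$ with a single color. A proposal that stops where the difficulty begins has not established the implication.

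Moreover, the one concrete idea you offer fails as stated. Suppose you split the vertex $z$ with $d_G(z)\geq5$ into a $2$-vertex $z'$ carrying the two edges of an even cycle avoiding $v$, and a vertex $z''$ of odd degree, and then invoke Proposition~\ref{sufficient condition 3-edge-colorability} on the modified graph. In any odd $3$-edge-coloring of that graph the two edges at $z'$ receive \emph{distinct} colors (a repeated color would have even degree $2$ at $z'$), so upon re-identifying $z'$ with $z''$ any color appearing at both parts has degree $(\text{odd})+1$, i.e.\ even, at $z$: the re-identified coloring is no longer odd at $z$. In Claim~3 of the proof of Proposition~\ref{block tree} this parity mismatch is harmless only because the split vertex there is a cut-vertex, and the colorings of the remaining lobes can be prescribed to be odd \emph{away from} $v$ with complementary parities at $v$; for your internal vertex $z$ of a $2$-connected graph no such complementary freedom exists. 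Your second suggestion (a well-chosen spanning odd co-forest whose complement decomposes further) is indeed the launching point of the paper's argument, but making it work against the genuine obstruction witnessed by the subdivided $K_{3,3}$ requires exactly the forbidden-cycle-pair and large-bouquet machinery culminating in the augmentation algorithm, none of which your proposal supplies. In short: the framing and reductions are correct and match the paper, but the core lemma is missing, and the only concrete route you propose toward it is blocked by a parity obstruction.
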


\begin{proof}
We shall establish both $(i)\Rightarrow (ii)$ and $(ii)\Rightarrow (i)$. Since $\mathcal{F}\subseteq \mathcal{S}$, the former implication consists of showing that every graph $G\in\mathcal{F}$ has odd chromatic index $\chi'_o(G)=4$. So, in view of the equality~\eqref{eqn:shan} and Proposition~\ref{glue}, we only need to use the following fact: \textit{Every graph that can be obtained from a bipartite cubic graph by a single edge subdivision is not odd $3$-edge-colorable.} The proof of this is a straightforward double-counting argument (see~\cite{AtaPetSkr16} for the details).

\smallskip

The key ingredient for proving the implication $(ii)\Rightarrow (i)$ is provided by the next auxiliary result.

\begin{lemma}
    \label{lema:2}
Let $G\in\mathcal{S}$ be a $2$-connected and essentially $3$-edge-connected graph such that both the order $n(G)$ and the maximum degree $\Delta(G)$ are greater than $3$. Then $\chi'_o(G)\leq3$.
\end{lemma}

\medskip

\noindent \textit{Proof.} Arguing by contradiction, let $G$ be a minimal counter-example. By Proposition~\ref{lema:1} and assuming $v$ is the unique $2$-vertex of $G$, the graph $G\%v$ is bipartite; that is, every cycle through $v$ is odd and every cycle avoiding $v$ is even. Since, apart from $v$, every other vertex in $G$ is of odd degree we have the following.

\bigskip

\noindent \textbf{Claim 1.} \textit{No connected even subgraph $H\subseteq G$ satisfies that $v\in V(H)$ and $n(H)$ is even.}

\medskip

\noindent Arguing by contradiction, note that in the edge-complement $\widehat{H}=G-E(H)$, the vertex $v$ is isolated whereas every other vertex has an odd degree. Take an odd factor $K$ of $H$, and color $E(K)$ by $1$, $E(H)\backslash E(K)$ by $2$ and $E(G)\backslash E(H)$ by $3$. This gives an odd $3$-edge-coloring of $G$, a contradiction.
\hfill$\diamond$

\bigskip

In particular, it follows from Claim~1 that there is no pair of cycles $C,C'$, one of which passes through $v$, such that $V(C)\cap V(C')$ is a singleton, say $\{z\}$; call this formation a \textit{forbidden cycle pair at $z$} (cf. Figure~\ref{fig:forbidden}).  Several structural constraints arise from the absence of forbidden cycle pairs.

\begin{figure}[ht!]
	$$
		\includegraphics{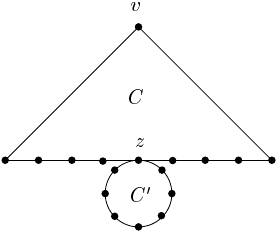}
	$$
	\caption{A forbidden cycle pair at a vertex $z$.}
	\label{fig:forbidden}
\end{figure}

\smallskip

Let $N_G(v)=\{u,w\}$. Since $G\in\mathcal{S}_4$, both $n(G)$ and  $\Delta(G)$ are odd and $\geq5$. Consider an arbitrary `large' vertex $z$, that is, a vertex of degree $d_G(z)\geq5$. By the $2$-connectedness of $G$, there exists a cycle $C\subseteq G$ such that $v,z\in V(C)$ and $|V(C)|\geq5$. Indeed, if $z\neq u,w$ then any cycle through $v$ and $z$ works; otherwise, select a vertex from $V(G)\backslash\{u,v,w\}$ and use a cycle passing through $v$ and that vertex. Let $P=C-v$ be the $u$-$w$ path that goes through $z$ and is contained within $C$ (it is not excluded that $z$ is an endvertex of $P$). We consider the collection $\mathcal{P}_z$ of paths $Q$ in $G-v$ such that $Q$ connects $z$ and another vertex of $P$ and $P\cap Q$ consists of these two vertices. Let us refer to the other endvertex of $Q\in\mathcal{P}_z$ as its \textit{ending}. We denote by $\mathrm{In}(Q)$ the set of \textit{internal vertices} of the path $Q$, those that are not its endvertices.\footnote{Not to be confused with `internal vertices of a block'.}

\bigskip

\noindent \textbf{Claim 2.} \textit{There is a mapping from $E_G(z)\backslash E_C(z)$ to $\mathcal{P}_z$, sending $e\mapsto Q_e$, such that $e\in E(Q_e)$. Moreover, for every such mapping it holds that}

\begin{equation*}
e\neq e' \quad \Rightarrow \quad Q_e \text{ and } Q_{e'} \text{ are internally disjoint, i.e., } \mathrm{In}(Q_e)\cap \mathrm{In}(Q_{e'})=\emptyset\,.
\end{equation*}

\medskip

\noindent Let $x$ be the other endvertex of $e$ (besides $z$). If $x\in V(P)$, all of the claimed is trivially true. Indeed, by then the path $Q_e$ is uniquely determined and $\mathrm{In}(Q_e)=\emptyset$ since $Q_e$ is the $1$-path with edge set $\{e\}$. Otherwise, if $x\notin V(P)$, then $x$ falls into a component, $K_e$, of $G-V(C)$. Note that then  $e\neq e'$ implies $K_e\neq K_{e'}$, for otherwise a forbidden cycle pair at $z$ (that includes $C$) is present (cf. Figure~\ref{fig:cycle pair}); in particular, $e$ and $e'$ are not parallel edges. From this readily it follows that $Q_e$ exists in this case as well. Namely, every edge in $\partial(V(K_e))$ has an endvertex in $K_e$ and an endvertex on $P$; moreover, $|\partial(V(K_e))|\geq3$ since $G$ is essentially $3$-edge-connected. Let us note in passing that neither $Q_e$ nor its ending are no longer uniquely determined (as we already established that no two edges from the edge cut $\partial(V(K_e))$ can have the same endvertex on $P$). Observe that $\mathrm{In}(Q_e)\subseteq V(K_e)$. Therefore, since $e\neq e'$ implies $K_e\neq K_{e'}$, we have that $e\neq e' \Rightarrow \mathrm{In}(Q_e)\cap \mathrm{In}(Q_{e'})=\emptyset$.\hfill$\diamond$

\begin{figure}[ht!]
	$$
		\includegraphics[scale=0.6]{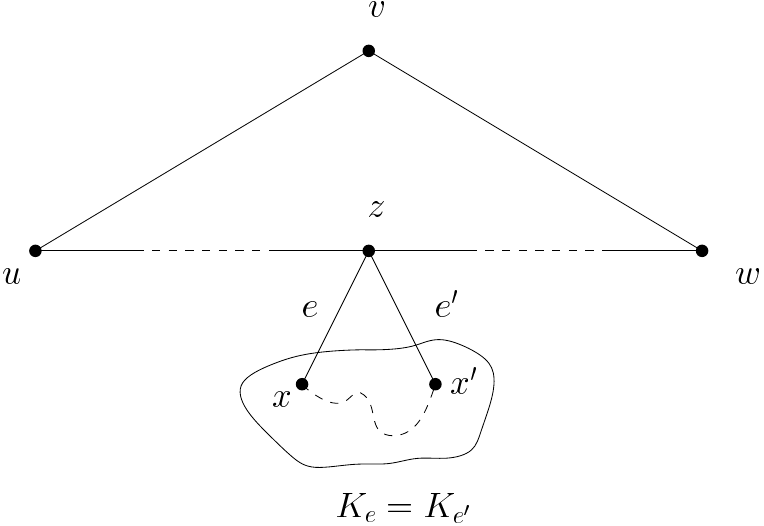}
	$$
	\caption{A forbidden cycle pair at $z$ if $K_e=K_{e'}$. Letting $x$ and $x'$ be the other endvertices (besides $z$) of $e$ and $e'$, respectively, any $x$-$x'$ path within the shared component combines with $e$ and $e'$ to produce a cycle.}
	\label{fig:cycle pair}
\end{figure}

\bigskip

Any subsequent use of notation $Q_e$ is to be understood in the context of Claim~2. We study next the following situation: $e,e'\in E_G(z)\backslash E_C(z)$ are distinct edges and $Q_e,Q_{e'}$ have endings on the same side of $P$ in respect of $z$.

\bigskip

\noindent \textbf{Claim 3.} \textit{Let $Q',Q''\in \mathcal{P}_z$ be internally disjoint, and have their respective endings lying on the same side of $z$ along $P$. Then $Q',Q''$ are $1$-paths and their shared ending is in $N_P(z)$.}

\medskip

\noindent First we show that $Q'$ and $Q''$ share the same ending. Arguing by contradiction, suppose their respective endings, say $z'$ and $z''$, differ. Without loss of generality, let $z'$ be an internal vertex of the subpath $zPz''$. Denote $C'=zPz'\cup Q'$ and $C''=zPz''\cup Q''$. Then $C\oplus C''$ and $C'$ constitute a forbidden cycle pair at $z$ (cf. Figure~\ref{fig:endings}). The obtained contradiction confirms that $Q',Q''$ have the same ending, say $z^*$. Note in passing that $z^*$ is another large vertex along $P$.

\begin{figure}[ht!]
	$$
		\includegraphics[scale=0.8]{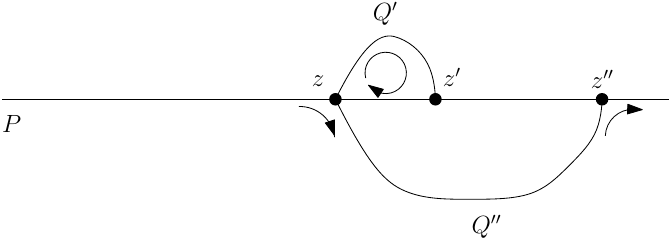}
	$$
	\caption{A detour from $P$ that yields a forbidden cycle pair at $z$.}
	\label{fig:endings}
\end{figure}

Next we prove that $Q',Q''$ are actually $1$-paths. For argument's sake, suppose $\mathrm{In}(Q')\neq\emptyset$. Then $\mathrm{In}(Q')$ is contained within a single component $K$ of $G-V(C)$. Note in passing that $V(K)\cap\mathrm{In}(Q'')=\emptyset$, by the proof of Claim~2. Consider the edge cut $\partial(V(K))$. The essential $3$-edge-connectedness of $G$ and Claim~2 together guarantee that there is a $V(P)$-$V(K)$ edge $f\notin E(Q')$ such that the endvertex of $f$ on $P$ is neither $z$ nor $z^*$. However, such an edge $f$ would  contradict the already established feature of shared path endings. Indeed, let $x$ and $y$ be the respective endvertices of $f$ in $V(P)$ and $V(K)$, and let $Q$ be a $y$-$\mathrm{In}(Q')$ path within $K$, say $y'$ is the other endvertex of $Q'$. Then each of the paths $zQ'y'\cup Q+f$ and $z^*Q'y'\cup Q+f$ is internally disjoint with $Q''$; moreover, $zQ'y'\cup Q+f\in\mathcal{P}_z$ and $z^*Q'y'\cup Q+f\in \mathcal{P}_{z^*}$. Hence, depending on the position of $x$ along $P$, at least one of the pairs $zQ'y'\cup Q+f,Q''$ and $z^*Q'y'\cup Q+f,Q''$ yields the mentioned contradiction  (either in regard to the large vertex $z$ or to the large vertex $z^*$; see Figure~\ref{fig:neighbor}). Consequently, both $Q'$ and $Q''$ are $z$-$z^*$ edges.

\begin{figure}[ht!]
	$$
		\includegraphics[scale=0.8]{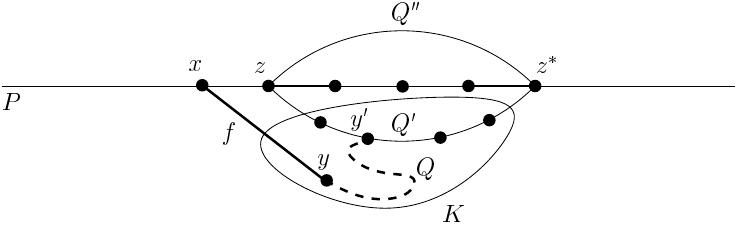}
	$$
	\caption{A pair of internally disjoint paths $z^*Q'y'\cup Q+f,Q''\in\mathcal{P}_{z^*}$ with distinct endings ($x$ and $z$, respectively) on the same side of $z^*$ along $P$.}
	\label{fig:neighbor}
\end{figure}

Finally, let us show that $z^*\in N_P(z)$, that is, $z^*$ is a neighbor of $z$ along $P$. Once again we argue by contradiction and evoke essential $3$-edge-connectedness. Suppose $\mathrm{In}(zPz^*)\neq\emptyset$, and let $H$ be the component of $G-(V(C)\backslash\mathrm{In}(zPz^*))$ that includes $\mathrm{In}(zPz^*)$. Since $|\partial(H)|\geq3$ there exists an edge $h\in \partial(H)\backslash E(P)$. It cannot be that $h$ has an endvertex within $\{z,z^*\}$. Indeed, for otherwise, $h$ would contradict the feature of shared path endings in regard to its endvertex in $\{z,z^*\}$ (see Figure~\ref{fig:neighbor}). Thus, without loss of generality, assume $h$ meets $P$ on the side of $z$ not including $z^*$. If $x$ is the endvertex of $h$ on $P$, then there is a path $R$ that starts at $x$ along $h$ and goes through $H$ until it reaches $P$ again, say at a vertex $y\in \mathrm{In}(zPz^*)$.

\begin{figure}[ht!]
	$$
		\includegraphics[scale=0.8]{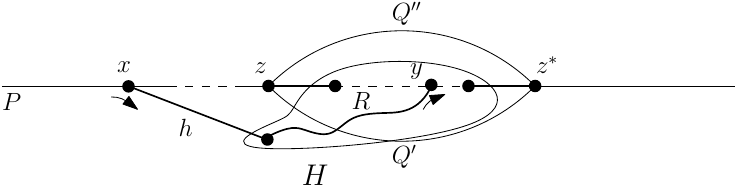}
	$$
	\caption{A detour from $P$ along $R$ yielding a forbidden cycle pair at $z^*$.}
	\label{fig:neighbor}
\end{figure}

\noindent Define $\overline{C}=xPy \cup R$ and $\overline{\overline{C}}=Q'\cup Q''$. Then $C\oplus \overline{C}$ and $\overline{\overline{C}}$ form a forbidden cycle pair at $z^*$. The obtained contradiction settles the claim.
\hfill$\diamond$

\bigskip

Since $z$ is a large vertex, we have that $|E_G(z)\backslash E_C(z)|\geq3$. Consequently, on at least one side along $P$ the vertex $z$ is incident with a $3^+$-bouquet $\mathcal{B}_{zz^*}$ (see Figure~\ref{fig:bouquet}); call it a \textit{large bouquet}. Thus, every large vertex lying on $P$ is incident with at least one large bouquet (shared with an adjacent large vertex along $P$). Moreover, every large bouquet incident with a vertex of $P$ is of this kind, for otherwise a forbidden cycle pair occurs. From Claims~2 and~3 it also follows that for each $e\in E_G(z)\backslash E_C(z)$, all the paths $Q_e$ have endings on the same side of $P$ with regard to $z$.

\begin{figure}[ht!]
	$$
		\includegraphics[scale=0.9]{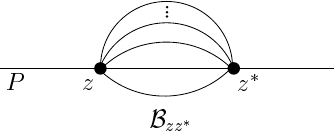}
	$$
	\caption{A large bouquet at $z$ (and at a neighbor $z^*$) along $P$.}
	\label{fig:bouquet}
\end{figure}

\bigskip

\noindent \textbf{Claim 4.} \textit{The multiplicity $\mu(G)=3$ whereas the maximum degree $\Delta(G)=5$. Moreover, every $5$-vertex on $P$ is incident with a $3$-bouquet, which it shares with a neighboring $5$-vertex along $P$.}

\medskip

\noindent We already noted prior to this claim that every large vertex along $P$ is incident with a large bouquet (shared with a large neighbor on $P$). Let us first  show that $\mu(G)=3$. Consider in $G$ a bouquet $\mathcal{B}$ of maximum size. Thus $|\mathcal{B}|\geq3$, implying that its endvertices are large vertices, say $z'$ and $z''$. So (by taking $z=z'$) we may assume that $\mathcal{B}$ is a large bouquet along $P$. Select two edges $e,f\in\mathcal{B}\backslash E(P)$. Now it is important to observe that in case $|\mathcal{B}|\geq4$ the graph $G-\{e,f\}$ satisfies all assumptions of the lemma. We proceed with clarifying this.

 If $|\mathcal{B}|\geq5$ then it is obvious that $G-\{e,f\}\in\mathcal{S}$ is a $2$-connected and essentially $3$-edge-connected graph having both order and maximum degree greater than $3$. On the other hand, supposing $|\mathcal{B}|=4$, the sets $E_G(z')\backslash(E(P)\cup \mathcal{B}_{z'z''})$ and $E_G(z'')\backslash(E(P)\cup \mathcal{B}_{z'z''})$ are even-sized. If neither of the vertices $z',z''$ is incident with another large bouquet along $P$, then (by Claims~2 and~3) the sets $E_G(z')\backslash(E(P)\cup \mathcal{B}_{z'z''})$ and $E_G(z'')\backslash(E(P)\cup \mathcal{B}_{z'z''})$ are actually empty. However, that would imply  $|\partial(\{z',z''\})|=2$, contradicting the essential $3$-edge-connectedness of $G$. So, at least one of $z',z''$ must be incident with a $3$-bouquet along $P$. This yields the same conclusion that $G-\{e,f\}$ satisfies all the assumptions of the lemma. Indeed, the $2$-connectedness, order and degree assumptions are clearly preserved. As for the essential $3$-edge-connectedness of $G-\{e,f\}$, suppose there is a nontrivial $2$-edge cut. Then $z',z''$ must be on different sides of this cut, and hence the cycle $C$ must have at least two edges in common with the cut. We have thus detected at least three edges in a $2$-edge cut, a contradiction.

  Therefore, $G-\{e,f\}$ is odd $3$-edge-colorable (by the minimality choice of $G$). But such a coloring of $E(G)\backslash\{e,f\}$ readily extends to an odd $3$-edge-coloring of $G$ by using for both $e,f$ one color already appearing on $\mathcal{B}\backslash\{e,f\}$, a contradiction. Hence, it must be that $|\mathcal{B}|=3$, confirming $\mu(G)=3$ and also showing that every large vertex along $P$ is incident with a $3$-bouquet.

Finally, suppose there is a large vertex $z$ of degree greater than $5$. It is incident with a $3$-bouquet $\mathcal{B}_{zz^*}$ along $P$, and has $|E_G(z)\backslash(E(P)\cup B_{zz^*})|\geq3$. In view of Claims~2 and~3, this inequality grants a $4^+$-bouquet along $P$ which is incident with $z$ and lies on the other side of $z^*$. However, such a bouquet contradicts with the already established equality $\mu(G)=3$. Consequently, $\Delta(G)=5$.
\hfill$\diamond$

\bigskip

So the $5$-vertices along $P$ come in pairs, each \textit{pair} consisting of two neighbors on $P$ which are the endvertices of a $3$-bouquet. Two such pairs $(z,z^*)$ and $(\bar{z},\bar{z}^*)$ are said to be \textit{successive} if the vertices $z,z^*,\bar{z},\bar{z}^*$ are in that relative order on a traversal of $P$ from $u$ to $w$ and $\mathcal{P}_{z^*}\cap\mathcal{P}_{\bar{z}}\neq\emptyset$ (cf. Figure~\ref{fig:successive}).

\begin{figure}[ht!]
	$$
		\includegraphics[scale=0.75]{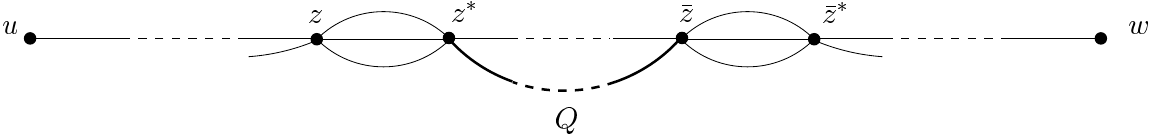}
	$$
	\caption{Successive pairs $z,z^*$ and $\bar{z},\bar{z}^*$. A $z^*$-$\bar{z}$ path $Q$ (depicted as fat) is internally disjoint from $P$, that is, it holds that $Q\in \mathcal{P}_{z^*}\cap\mathcal{P}_{\bar{z}}$.}
	\label{fig:successive}
\end{figure}

Consider a maximal sequence $\mathcal{Z}:(z_1,z_1^*),(z_2,z_2^*),\ldots,(z_n,z_n^*)$ of pairs (of $5$-vertices along $P$) subjected to the condition that the pairs $(z_i,z_i^*)$ and $(z_{i+1},z_{i+1}^*)$ are successive, for each $i=1,2,\ldots,n-1$. Select a path $Q_i\in \mathcal{P}_{z_i^*}\cap\mathcal{P}_{z_{i+1}}$, for each $i=1,2,\ldots,n-1$. Also take paths $Q_0\in\mathcal{P}_{z_1}\backslash\mathcal{P}_{z_1^*}$ and $Q_n\in\mathcal{P}_{z_n^*}\backslash\mathcal{P}_{z_n}$. If $x$ and $y$ are the other endvertices of $Q_0$ and $Q_n$, respectively, besides $z_1$ and $z_n^*$. By the maximality choice of $\mathcal{Z}$, the vertices $x$ and $y$ are $3$-vertices of $G$. Moreover, the vertices $x,z_1,z_1^*,z_2,z_2^*,\ldots,z_n,z_n^*,y$ are in that relative order on traversal of $P$ from $u$ to $w$. Consider the subgraph $H$ of $G$ defined as follows (cf. Figure~\ref{fig:subgraph}): $$H=C\cup\bigcup\{Q_i:i=0,\ldots,n\}\cup\bigcup\{B_{z_iz_i^*}:i=1,\ldots,n\}\,.$$

\begin{figure}[ht!]
	$$
		\includegraphics[scale=0.7]{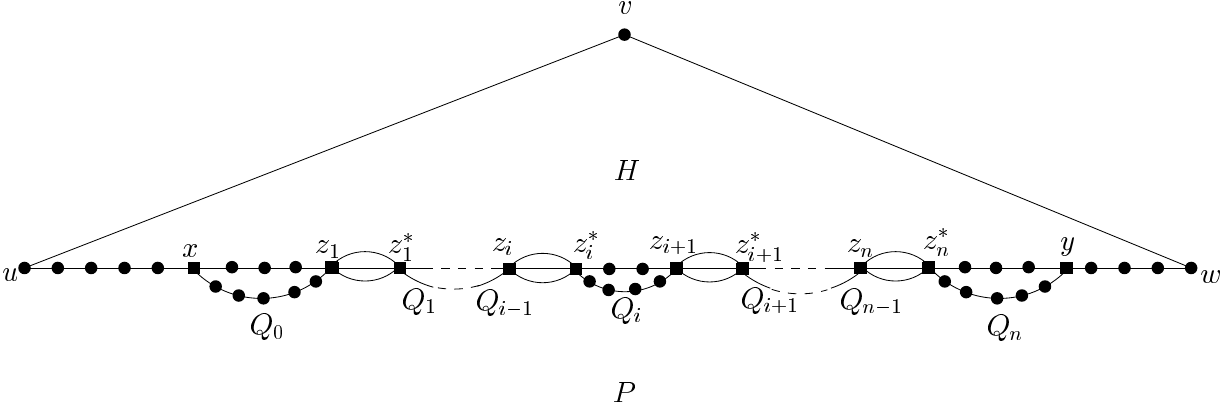}
	$$
	\caption{A sketch of the subgraph $H\subseteq G$.}
	\label{fig:subgraph}
\end{figure}

Denote $Z=\{x,z_1,z_1^*,\ldots,z_n,z_n^*,y\}$.  Observe that every vertex from $Z$ has the same degree in regards to both $H$ and $G$. Thus, every vertex from $Z$ is isolated in the edge-complement $\widehat{H}=G-E(H)$. Every other vertex of $H$ has degree $2$. Moreover, the set $E(H)$ has the following important features: $(i)$ it contains at least one $3$-bouquet (surely $\mathcal{B}_{z_1z_1^*}$ is such), and $(ii)$ if all $3$-bouquets are to be removed from $H$, then all that remains is the path $xPuvwPy$ and a collection of even pairwise disjoint cycles $C_0,C_1,\ldots,C_{n-1},C_n$, where the cycle $C_i$ consists of the path $Q_i$ and a suitable portion of $P$.

The above mentioned features of $E(H)$ enable the following construction of a particular edge-coloring $\varphi_H$ of $H$ with color set $\{1,2,3\}$: start by taking a proper edge-coloring of the path $xPuvwPy$ with color set $\{1,2\}$; for the remaining two uncolored edges at $x$ (belonging in $C_0$) use the already appearing color ($1$ or $2$), and then extend to an edge-coloring of $C_0$ with color set $\{1,2\}$ which is proper at each vertex $\neq x,z_1$; similarly,  for the remaining two uncolored edges at $y$ (belonging in $C_n$) use the already appearing color ($1$ or $2$), and extend to an edge-coloring of $C_n$ with color set $\{1,2\}$ which is proper at each vertex $\neq y,z_n^*$; proceed by using all three colors $1,2,3$ on each of the $3$-bouquets $\mathcal{B}_{z_1z_1^*},\ldots,\mathcal{B}_{z_nz_n^*}$; finally, to each of the remaining uncolored (even and disjoint) cycles $C_1,C_2,\ldots,C_{n-1}$ apply an  edge-coloring with color set $\{1,2\}$ which is proper at each vertex outside $Z$ (i.e., alternate here between the colors $1$ and $2$), whereas the coloring is improper at each vertex from $Z$ (i.e., repeat here the same color). Notice that the color $3$ occurs only on edges having both endvertices in $Z\backslash\{x,y\}$. Moreover, $\varphi_H$ is monochromatic (and thus odd) at each of the vertices $x,y$.

Now extend the above constructed $\varphi_H$ from $E(H)$ to $E(G)$ by coloring $E(G)\backslash E(H)$ with $3$. Since every vertex from $Z$ is isolated in $\widehat{H}$, the described extension is an odd $3$-edge-coloring of $G$. This contradiction settles Lemma~\ref{lema:2}. \qed

\medskip

The freshly proved Lemma~\ref{lema:2}, combined with Propositions~\ref{glue} and~\ref{lema:1}, yields the implication $(ii)\Rightarrow (i)$. Indeed, arguing by contradiction, let $G\in\mathcal{S}_4\backslash \mathcal{F}$ be a block graph of minimum order $n(G)$. Taking into account Proposition~\ref{glue}, it is implied by part $(c)$ of the construction of $\mathcal{F}$ that $G$ is essentially $3$-edge-connected, besides being $2$-connected. Consequently, by Lemma~\ref{lema:2}, it holds that $n(G)=3$ or $\Delta(G)=3$. However, if $n(G)=3$ then $G$ must be a Shannon triangle of type $(2,2,1)$ with $\delta(G)=2$; hence $G\in \mathcal{F}$ (due to part $(a)$ of the construction). Otherwise, if $\Delta(G)=3$ then Proposition~\ref{lema:1} assures that $G\in\mathcal{F}$ (due to part $(b)$ of the construction). The obtained contradiction settles the implication $(ii)\Rightarrow (i)$, which completes the proof of Theorem~\ref{2-connected}.
\end{proof}


\bigskip

  Finally, we arrive at the main result, which succinctly summarizes our findings.

\begin{theorem}
    \label{thm}
Let $G\in \mathcal{S}$ be a connected graph. If $(\mathcal{B},\mathcal{V})$ is the bipartition of the block-tree $B(G)$ of $G$, where $\mathcal{B}$ is the set of blocks and $\mathcal{V}$ the set of cutvertices of $G$, the following holds:
\begin{equation*}
\chi'_o(G)=
\begin{cases}
1 & \text{\quad if \,} G \text{ is odd} \,;\\
2 & \text{\quad if \,} G \text{ has 2-vertices, with an even number of them on each cycle}\,;\\
4 & \text{\quad if \,} \mathcal{B}\subseteq\mathcal{F} \text{ and for every }  v\in\mathcal{V} \text{ there is a unique } B\in\mathcal{B} \text{ with odd } d_B(v)\,;\\
3 & \text{\quad otherwise}\,.
\end{cases}
\end{equation*}
\end{theorem}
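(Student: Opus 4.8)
The plan is to obtain the four cases by reading off the already-established equivalences, using the value of $\chi'_o(G)$ itself as the organizing principle. I would first record two preliminary observations. Since $G$ is a connected subdivision of an odd graph $G_0$, and the smallest odd graph is $K_2$, the graph $G$ carries at least one edge, whence $\chi'_o(G)\geq 1$; and because subdivision creates only $2$-vertices while leaving the (odd) degrees of the original vertices untouched, $G$ is odd exactly when no edge has been subdivided, i.e.\ exactly when $G$ has no $2$-vertex. This already settles the first line: if $G$ is odd, then the single class $E(G)$ induces an odd subgraph and $\chi'_o(G)=1$, while $\chi'_o(G)=1$ clearly forces $G$ odd.

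Next I would characterize the value $2$ by a verbatim application of Corollary~\ref{odd 2-edge-colorability}: for $G\in\mathcal{S}$, one has $\chi'_o(G)=2$ if and only if $G\notin\mathcal{O}$ and every cycle carries an even number of $2$-vertices. The side condition $G\notin\mathcal{O}$ is, by the observation above, the same as ``$G$ has a $2$-vertex'', so this is precisely the second case. At this point Proposition~\ref{4-edge-colorability} supplies $\chi'_o(G)\leq 4$, so the invariant is confined to $\{1,2,3,4\}$ and the values $1,2$ are pinned down.

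The heart of the argument is the value $4$, which I would characterize by combining Proposition~\ref{block tree} with Theorem~\ref{2-connected}. Proposition~\ref{block tree} gives $\chi'_o(G)=4$ if and only if every block of $G$ lies in $\mathcal{S}_4$ and every cut-vertex $v$ has a unique incident block $B$ with $d_B(v)$ odd; the latter clause is exactly the block-cutpoint condition displayed in the third line, so what remains is to prove the equivalence of ``every block lies in $\mathcal{S}_4$'' with ``$\mathcal{B}\subseteq\mathcal{F}$''. For this I would argue that a block in $\mathcal{S}_4$ cannot be a single-edge (bridge) block, since $\chi'_o(K_2)=1$; hence every such block is a $2$-connected member of $\mathcal{S}$, to which Theorem~\ref{2-connected} applies, giving $\mathcal{S}_4$-membership $\Leftrightarrow$ $\mathcal{F}$-membership. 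The converse uses $\mathcal{F}\subseteq\mathcal{S}_4$, again from Theorem~\ref{2-connected}. I expect this to be the step requiring the most care: one must check that the blocks in play are genuinely $2$-connected subdivisions of odd graphs so that Theorem~\ref{2-connected} is applicable, rule out the degenerate single-edge blocks, and match the parameter $d_B(v)$ of Proposition~\ref{block tree} with the block-cutpoint formulation.

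Finally, with the values $1,2,4$ each characterized by one of the first three displayed conditions and with $\chi'_o(G)\in\{1,2,3,4\}$, the ``otherwise'' line necessarily collects exactly the graphs with $\chi'_o(G)=3$; no direct construction of an odd $3$-edge-coloring is then required, the value being forced by exclusion. The three characterizations are automatically mutually exclusive, since each determines a distinct value of the single-valued invariant $\chi'_o(G)$, so the piecewise description is unambiguous.
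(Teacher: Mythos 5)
Your proposal is correct and follows essentially the same route as the paper, whose entire proof is the one-line remark that the theorem is straightforward from Corollary~\ref{odd 2-edge-colorability}, Proposition~\ref{block tree} and Theorem~\ref{2-connected}; you assemble exactly these three ingredients, with the values $1,2,4$ pinned down and $3$ obtained by exclusion via the bound $\chi'_o(G)\leq4$. Your added care about the bridge blocks (noting $\chi'_o(K_2)=1$, so any block in $\mathcal{S}_4$ is $2$-connected and Theorem~\ref{2-connected} applies) is a correct filling-in of a detail the paper leaves implicit.
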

\begin{proof}
Straightforward from Corollary~\ref{odd 2-edge-colorability}, Proposition~\ref{block tree} and Theorem~\ref{2-connected}.
\end{proof}

\section{Further work}

It is implied by Theorem~\ref{thm} that the problem of determining the odd chromatic index of a subdivision of an odd graph is solvable in polynomial time. In view of Theorems~\ref{odd 4-edge-colorability} and~\ref{Kano}, the complexity questions concerning the value of the odd chromatic index of general graphs amount to deciding on their odd $3$-edge-colorability. As already mentioned in Section~1.2, this question is still open.
Let us propose the study of a related set of questions arising from the following line of reasoning. The class $\mathcal{S}$ can be captured by using the notion of \textit{maximum even degree}, defined as follows. Let $\Delta_{\mathrm{even}}(G)$ denote the maximum even value among the vertex degrees of $G$. Thus $\mathcal{S}=\{G: G \text{ is a loopless graph with }\Delta_{\mathrm{even}}(G)\leq2\}$, and Theorem~\ref{thm} tells that the problem of determining $\chi'_o(G)$ whenever $\Delta_{\mathrm{even}}(G)\leq2$ can be efficiently solved. For every $k=0,1,2,\ldots$, let  $\mathcal{S}^{(2k)}=\{G: G \text{ is a loopless graph with }\Delta_{\mathrm{even}}(G)\leq2k\}$. So, by ignoring isolated vertices, $\mathcal{S}^{(0)}=\mathcal{O}$; and obviously  $\mathcal{S}^{(2)}=\mathcal{S}$. We find the next question interesting.

\begin{question}
    \label{q1}
Is the decision problem whether a graph $G\in \mathcal{S}^{(4)}$ has $\chi'_o(G)\leq3$ solvable in polynomial time?
\end{question}

A positive answer to Question~\ref{q1} would open
the door for considering the following more general problem.

\begin{question}
    \label{q2}
Given a positive integer $k$, is the decision problem whether a graph $G\in \mathcal{S}^{(2k)}$ has $\chi'_o(G)\leq3$ solvable in polynomial time?
\end{question}

\begin{figure}[ht!]
	$$
		\includegraphics[scale=1.1]{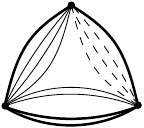}
	$$
	\caption{A Shannon triangle of type $(2,2,2)$ that requires three colors for an $\mathcal{S}$-edge-coloring. The edges falling in distinct color classes of an optimal coloring are respectively depicted as dashed, normal and heavier.}
	\label{fig:sh}
\end{figure}

Another possible field of study is to consider $\mathcal{S}^{(2k)}$-edge-colorability of graphs for a fixed positive integer $k$ (instead of odd edge-colorability); that is, define a new type of edge-coloring by requiring that each color class is a member of $\mathcal{S}^{(2k)}$ (rather than of $\mathcal{O}=\mathcal{S}^{(0)}$). Say the corresponding index (representing the minimum sufficient number of colors) is $\chi'_{\mathcal{S}^{(2k)}}(G)$. For example, it is readily observed that $\chi'_{\mathcal{S}}(W_4)=2$ as opposed to $\chi'_o(W_4)=4$. Similarly, if $G$ is a Shannon triangle of type $(2,2,2)$, then $\chi'_{\mathcal{S}}(G)\leq3$ in contrast to $\chi'_o(G)=6$ (cf. Figure~\ref{fig:sh}).
Note that there are graphs requiring at least four colors for an $\mathcal{S}$-edge-coloring. Namely, every Shannon triangle $G$ of type $(2,2,1)$ and multiplicity $\mu(G)\geq3$ has $\chi'_{\mathcal{S}}(G)=4$ (cf. Figure~\ref{fig:s}).

\begin{figure}[ht!]
	$$
		\includegraphics[scale=1.1]{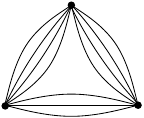}
	$$
	\caption{A Shannon triangle of type $(2,2,1)$ that requires four colors for an $\mathcal{S}$-edge-coloring. Its bouquets are of size $4, 4$ and $3$, respectively.}
	\label{fig:s}
\end{figure}

We are tempted to end our discussion here with the following.

\begin{conjecture}
    \label{conj:3}
If $G$ is a connected loopless graph that is not a Shannon triangle of type $(2,2,1)$ and multiplicity $\mu(G)\geq3$, then $\chi'_{\mathcal{S}}(G)\leq3$.
\end{conjecture}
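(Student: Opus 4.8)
The plan is to bound $\chi'_{\mathcal{S}}$ from above by first cashing in the inclusion $\mathcal{O}\subset\mathcal{S}$, which gives $\chi'_{\mathcal{S}}(G)\le\chi'_o(G)$ for every graph, and then grinding down the few residual cases. In particular, any connected loopless $G$ with $\chi'_o(G)\le 3$ is immediately done, so by Theorem~\ref{odd 4-edge-colorability} the only graphs not covered by this cheap observation are those with $\chi'_o(G)=4$ together with the two exceptional Shannon triangles of types $(2,2,1)$ and $(2,2,2)$. The Shannon triangles I would treat by hand: using~\eqref{eqn:shan} and the digon/path decompositions of their (even) bouquets one checks directly that type $(2,2,2)$ and type $(2,2,1)$ of multiplicity $\le 2$ admit explicit $\mathcal{S}$-$3$-colorings, while for the genuinely excluded family --- type $(2,2,1)$ with multiplicity $\ge 3$ --- a short parity count shows that three classes cannot avoid an even vertex of degree $\ge 4$, giving the matching lower bound $\chi'_{\mathcal{S}}=4$.

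For the generic case I would use a subdivide--unsubdivide engine. Since $\chi'_o(G)\le 3$ whenever $n(G)$ is even or $\kappa'(G)=1$ (Proposition~\ref{even order}), I may assume $n(G)$ is odd and $\kappa'(G)\ge 2$. Pick an edge $e=uy$ and subdivide it by a new $2$-vertex $z$, obtaining a connected loopless graph $G'$ of even order; by Proposition~\ref{even order} it has an odd $3$-edge-coloring $\varphi'$. At $z$ the two edges $uz,zy$ necessarily receive distinct colors $a\ne b$. Unsubdividing (replacing $uzy$ by the single edge $uy$) and coloring $uy$ by $b$ changes the local color pattern only at $u$, where color $a$ drops by one and color $b$ rises by one; every other vertex keeps its $\varphi'$-degrees, hence stays odd-or-$2$ and lies in $\mathcal{S}$. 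Thus the whole coloring is an $\mathcal{S}$-$3$-coloring except possibly at the single \emph{defect} vertex $u$. The crucial gain is that if $u$ can be chosen with $d_G(u)\le 3$, then at $u$ every color degree is $\le 3$, so any even one is automatically $\le 2$ and the defect is harmless. Consequently $\delta(G)\le 3$ forces $\chi'_{\mathcal{S}}(G)\le 3$, and the entire problem reduces to connected loopless graphs of odd order with $\delta(G)\ge 4$ and $\kappa'(G)\ge 2$.

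For this hard nucleus the plan is induction on $m(G)$ by peeling off a cycle. Since $\delta(G)\ge 2$ there is a cycle $C\subseteq G$; the graph $G-E(C)$ has the same degree parities (each vertex of $C$ drops by exactly $2$), so it again falls outside the trivial cases and, by minimality, admits an $\mathcal{S}$-$3$-coloring. The key observation is that a cycle can be merged back into any color class in which no vertex of $V(C)$ currently has degree exactly $2$: adding $C$ raises each such degree by $2$, turning odd into odd and $0$ into $2$, both admissible in $\mathcal{S}$. So the induction closes provided the recursive coloring offers a \emph{$C$-compatible} class, that is, one whose restriction to $V(C)$ has no even degree. A natural way to arrange this is to always keep one class an honest odd subgraph (into which cycles may be freely absorbed, exactly in the spirit of Proposition~\ref{even order}), and to peel $C$ so that it meets this class only in odd-or-isolated vertices.

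The hard part will be precisely this last guarantee. In the high-minimum-degree, highly-connected regime there is no low-degree vertex to soak up the defect, and maintaining a $C$-compatible odd class through the induction is not automatic --- peeling the wrong cycle can force every class to carry a degree-$2$ vertex on $V(C)$, and re-routing to repair this can cascade fresh even-degree-$\ge 4$ violations elsewhere. Phrased instead through Theorem~\ref{odd 4-edge-colorability}, one has a $4$-coloring with $|E_4|\le 2$ satisfying conditions $(i)$--$(ii)$, and absorbing each $e=xy\in E_4$ into one of the classes $1,2,3$ makes $x,y$ even there, so one must simultaneously lower the two endpoint degrees to $\le 2$ without propagating new defects; controlling this cascade is the genuine difficulty, and is exactly why the statement is posed only as a conjecture. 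I would therefore expect the essential work to lie in a structural lemma certifying the existence of a $C$-compatible (equivalently, cleanly absorbable) class in the $\delta(G)\ge 4$, $\kappa'(G)\ge 2$ case.
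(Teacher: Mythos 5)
You are attempting a statement that the paper itself leaves open: Conjecture~\ref{conj:3} has no proof in the paper, and your attempt does not supply one either --- as you yourself concede in your final paragraph --- so let me separate what is solid from where it fails. Solid and worth keeping: the inequality $\chi'_{\mathcal{S}}(G)\le\chi'_o(G)$ from $\mathcal{O}\subset\mathcal{S}$, the by-hand treatment of the Shannon triangles, and especially the subdivide--unsubdivide reduction. That step checks out: in the subdivided graph the two edges at the new $2$-vertex $z$ must receive distinct colors (equal colors would give even degree $2$ at $z$ in an odd coloring), unsubdividing perturbs color degrees only at $u$ (one class drops by $1$, one rises by $1$), and if $d_G(u)\le3$ then every even class degree at $u$ is automatically at most $2$. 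Combined with Proposition~\ref{even order}, this legitimately proves the conjecture whenever $n(G)$ is even, $\kappa'(G)=1$, or $\delta(G)\le3$, reducing everything to connected loopless graphs of odd order with $\kappa'(G)\ge2$ and $\delta(G)\ge4$.

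The genuine gap is the cycle-peeling induction for this residual class, and it is not merely unfinished but self-defeating as set up. To absorb $C$ you need the recursive coloring of $G-E(C)$ to contain a class with no degree-$2$ vertex on $V(C)$ --- for instance an odd class, into which $C$ merges harmlessly since degrees change from odd to odd and from $0$ to $2$. But the plain inductive statement ($\chi'_{\mathcal{S}}\le3$) provides no such class, while the strengthened statement (``there is an $\mathcal{S}$-$3$-coloring with one odd class'') cannot be propagated: after absorbing $C$, the formerly odd class in general acquires degree-$2$ vertices (at the vertices of $C$ it missed), so the coloring you construct for $G$ fails to certify the very hypothesis the induction needs at the next level. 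Moreover, $G-E(C)$ can be disconnected, can have isolated vertices, and can even have as a component exactly the excluded Shannon triangle of type $(2,2,1)$ with multiplicity $\ge3$, for which no $\mathcal{S}$-$3$-coloring exists; so ``by minimality'' invokes a statement that is false for some graphs your recursion produces. Your alternative route through Theorem~\ref{odd 4-edge-colorability} hits the same wall: merging an edge $xy\in E_4$ into a class of $\{1,2,3\}$ makes both $x$ and $y$ even there, and with $\delta(G)\ge4$ you have no mechanism forcing those even degrees down to $\le2$ without cascading new violations. In short, you have a respectable partial result, but the missing ingredient --- a certified absorbable class in the odd-order, $2$-edge-connected, $\delta\ge4$ regime --- is precisely the open content of the conjecture, and nothing in the paper (or your sketch) supplies it.
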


One wonders whether the bound $3$ in Conjecture~\ref{conj:3} may drop to $2$ if $\mathcal{S}$ is replaced  with a certain $\mathcal{S}^{(2k)}$ of sufficiently large $k$. Understandably, the list of excluded graphs might become longer.


\bigskip
\noindent
{\bf Acknowledgements.}
This work is partially supported by ARRS Program P1-0383 and ARRS Projects J1-1692 and J1-3002.

\bibliographystyle{plain}

\end{document}